\documentclass{amsart}
\usepackage{amssymb, amsmath, amsthm, graphics, comment, xspace, enumerate}
\usepackage{hyperref}
\baselineskip 18pt

\vfuzz2pt 
\hfuzz2pt 
\newtheorem{thm}{Theorem}[section]

\newtheorem{lem}[thm]{Lemma}
\newtheorem{prop}[thm]{Proposition}
\theoremstyle{definition}
\newtheorem{defn}[thm]{Definition}
\newtheorem{example}[thm]{Example}
\theoremstyle{remark}

\numberwithin{equation}{section}


\begin{document}
\title[Remotely $c$-almost periodic type functions in ${\mathbb R}^{n}$]{Remotely $c$-almost periodic type functions in ${\mathbb R}^{n}$}

\author{M. Kosti\' c}
\address{Faculty of Technical Sciences,
University of Novi Sad,
Trg D. Obradovi\' ca 6, 21125 Novi Sad, Serbia}
\email{marco.s@verat.net}

\author{V. Kumar}
\address{Max-Planck Institute for Dynamics of Complex Technical Systems, Sandtorstr. 1, 39106 Magdeburg, Germany}
\email{math.vipinkumar219@gmail.com}

{\renewcommand{\thefootnote}{} \footnote{2010 {\it Mathematics
Subject Classification.} 42A75, 43A60, 47D99.
\\ \text{  }  \ \    {\it Key words and phrases.} Remotely $c$-almost periodic functions in ${\mathbb R}^{n},$ slowly oscillating functions in ${\mathbb R}^{n},$
quasi-asymptotically $c$-almost periodic functions in ${\mathbb R}^{n},$
abstract Volterra integro-differential equations, Richard-Chapman ordinary differential equation with external perturbation.
\\  \text{  }  
Marko Kosti\' c is partially supported by grant 451-03-68/2020/14/200156 of Ministry
of Science and Technological Development, Republic of Serbia.}}

\begin{abstract}
In this paper, we relate the notions of remote
almost periodicity and quasi-asymptotical almost periodicity; in actual fact, we observe that a remotely almost periodic function is nothing else but a bounded, uniformly continuous quasi-asymptotically almost periodic function.
We introduce and analyze several new classes of remotely $c$-almost periodic functions in ${\mathbb R}^{n},$ slowly oscillating functions in ${\mathbb R}^{n},$
and
further analyze the recently introduced class of quasi-asymptotically $c$-almost periodic functions in ${\mathbb R}^{n}.$
We provide certain applications of our theoretical results to
the abstract Volterra integro-differential equations and the ordinary differential equations.
\end{abstract}
\maketitle

\section{Introduction and preliminaries}

The notion of almost periodicity was introduced by the Danish mathematician H. Bohr around 1924--1926 and later reconsidered by many others. 
Let $(X,\| \cdot \|)$ be a complex Banach space, and let $F : {\mathbb R}^{n} \rightarrow X$ be a continuous function ($n\in {\mathbb N}$). Then it is said that $F(\cdot)$ is almost periodic if and only if for each $\epsilon>0$
there exists $l>0$ such that for each ${\bf t}_{0} \in {\mathbb R}^{n}$ there exists ${\bf \tau} \in B({\bf t}_{0},l)$ with
\begin{align*}
\bigl\|F({\bf t}+{\bf \tau})-F({\bf t})\bigr\| \leq \epsilon,\quad {\bf t}\in {\mathbb R}^{n},
\end{align*}
where $B({\bf t}_{0},l)$ denotes the closed ball in ${\mathbb R}^{n}$ with center ${\bf t}_{0}$ and radius $l>0.$
Equivalently, $F(\cdot)$ is almost periodic if and only if  for any sequence $({\bf b}_n)$ in ${\mathbb R}^{n}$ there exists a subsequence $({\bf a}_{n})$ of $({\bf b}_n)$
such that $(F(\cdot+{\bf a}_{n}))$ converges in $C_{b}({\mathbb R}^{n}: X),$ the Banach space of bounded continuous functions $F :{\mathbb R}^{n}\rightarrow X$ equipped with the sup-norm. Any trigonometric polynomial in ${\mathbb R}^{n}$ is almost periodic and it is also well known that $F(\cdot)$ is almost periodic if and only if there exists a sequence of trigonometric polynomials in ${\mathbb R}^{n}$ which converges uniformly to $F(\cdot).$

Any almost periodic function $F(\cdot)$ is bounded and uniformly continuous. Further on,
if $F : {\mathbb R}^{n} \rightarrow X$ is an almost periodic function, then $F(\cdot)$ is uniformly recurrent in the sense that $F(\cdot)$ is continuous and there exists a sequence $({\bf \tau}_{k})$ in ${\mathbb R}^{n}$ such that $\lim_{k\rightarrow +\infty} |{\bf \tau}_{k}|=+\infty$ and
$$
\lim_{k\rightarrow +\infty}\sup_{{\bf t}\in {\mathbb R}^{n}} \bigl\|F\bigl({\bf t}+{\bf \tau}_{k}\bigr)-F({\bf t})\bigr\| =0.
$$
In \cite{marko-manuel-ap}, a joint paper with A. Ch\'avez, K. Khalil and M. Pinto, the first named author has analyzed
various classes of almost periodic functions of form $F : I \times X\rightarrow Y,$ where $(Y,\|\cdot \|_{Y})$ is a complex Banach spaces and $\emptyset \neq  I \subseteq {\mathbb R}^{n}$.
For more details about almost periodic functions and their applications, we refer the reader to \cite{besik, marko-manuel-ap, diagana, fink, gaston, nova-mono, nova-selected, 188, pankov, 30}.

On the other hand, the class of \emph{S}-asymptotically $\omega$-periodic functions was thoroughly analyzed by H. R. Henr\'iquez, M. Pierri and P. T\' aboas \cite{pierro} in 2008 (for some applications of 
$S$-asymptotically $\omega$-periodic functions, we refer the reader to \cite{prc-marek, cuevas-souza, dimbour, guengai, pierro,xie} and references quoted therein). 
In \cite{brazil}, we have recently introduced and analyzed the class of quasi-asymptotically almost periodic functions following the approach of A. S. Kovanko \cite{kovanko}. Any
$S$-asymptotically $\omega$-periodic function 
$f: I \rightarrow X$ is quasi-asymptotically almost periodic, while the converse statement is not true in general (\cite{brazil}). 

Further on, in our joint research studies \cite{c1}-\cite{BIMV} with M. T. Khalladi, A. Rahmani, M. Pinto and D. Velinov,
the classes of (Stepanov, Weyl)
$c$-almost periodic type functions, quasi-asymptotically $c$-almost periodic type functions and $S$-asymptotically $(\omega ,c)$-periodic type functions have been examined in the one-dimensional setting. In the research articles \cite{multi-ce}-\cite{multi-arx}, the authors have analyzed multi-dimensional $c$-almost periodic type functions and various classes of multi-dimensional $(\omega,c)$-almost periodic type functions. 
Here we observe that a bounded continuous function $F: {\mathbb R}^{n} \rightarrow X$ is 
remotely almost periodic if and only if $F(\cdot)$ is uniformly continuous and quasi-asymptotically almost periodic.
We investigate various notions of remote $c$-almost periodicity in ${\mathbb R}^{n}$ following the approach obeyed in our recent research article \cite{multi-arx},
where we have analyzed quasi-asymptotically $c$-almost periodic type functions in ${\mathbb R}^{n}.$ The functions under our consideration are defined on a general region $I\subseteq {\mathbb R}^{n}$,  which need not contain the zero vector or be closed under the pointwise addition.  

It is said that a bounded continuous function $F: {\mathbb R}^{n} \rightarrow X$ is slowly oscillating if and only if for each $\omega \in {\mathbb R}^{n}$ we have $\lim_{|{\bf t}| \rightarrow +\infty}\| F({\bf t}+\omega)-F({\bf t})\|=0.$
We know that any slowly oscillating function $F(\cdot)$ is uniformly continuous. Concerning applications of slowly oscillating functions made so far, let us recall that 
F. Yang and C. Zhang have analyzed  
slowly oscillating solutions of parabolic inverse problems in \cite{f-yang1}; see also the research articles \cite{c-zhang-guo, c-zhang1, c-zhang}, and \cite{s-zhang} by S. Zhang, D. Piao.
In this paper, we introduce and analyze some new classes of 
slowly oscillating functions in ${\mathbb R}^{n},$
and further analyze multi-dimensional quasi-asymptotically $c$-almost periodic functions; for simplicity, we will not consider the corresponding Stepanov classes here.

The organization and main ideas of this paper can be briefly described as follows. In Section \ref{so-ce}, we introduce and analyze 
slowly oscillating type functions in ${\mathbb R}^{n}.$ The notion of a $({\mathbb D},{\mathcal B})$-slowly oscillating function is introduced in Definition \ref{drasko-norm}; the main aim of Proposition \ref{koq} is to indicate that the class of $({\mathbb D},{\mathcal B},c)$-slowly oscillating functions
is not interesting for further investigations, if $c\in {\mathbb C} \setminus \{0\}$ and $c\neq 1.$ In Proposition \ref{whiskey}, we extend the well known result of D. Sarason \cite[Proposition 1]{sarason1} concerning the uniform continuity of slowly oscillating functions to the multi-dimensional setting.
The main aim of Example \ref{pier} is to show the existence of a vector-valued slowly oscillating function with not relatively compact range. The class of $({\mathcal B},({\mathbb D}_{j})_{j\in {\mathbb N}_{n}})$-slowly oscillating functions is introduced in
Definition \ref{drasko-arx}; after that, we explain how many structural results established in \cite{multi-arx} can be used to provide certain characterizations of  $({\mathbb D},{\mathcal B})$-slowly oscillating functions and $({\mathcal B},({\mathbb D}_{j})_{j\in {\mathbb N}_{n}})$-slowly oscillating functions.
Concerning the usual class of one-dimensional slowly oscillating functions, we would like to note that 
the statement of \cite[Lemma 2.1]{brindle}, which has recently been established by D. Brindle in his doctoral dissertation and which plausibly holds for uniformly integrable resolvent operator families under consideration, and the statement of \cite[Theorem 3.9]{y-k chang} with $k=0,$ which has recently been proved by 
Y.-K. Chang and Y. Wei,
can be used to profile important results concerning the invariance of slowly oscillating property under the actions of finite convolution products and the actions of infinite convolution products, 
respectively (the multi-dimensional analogues can be deduced without any substantial difficulties; see \cite{marko-manuel-ap} for more details). Such results enable one to analyze the existence and uniqueness of slowly oscillating solutions for various classes of the abstract Volterra integro-differential equations considered in \cite{nova-mono} and \cite{nova-selected}; see also Example \ref{mp} below.

Section \ref{qaap} essentially continues our recent 
analysis of multi-dimensional quasi-asymptotically $c$-almost periodic functions (\cite{multi-arx}).
In this section, we state the fundamental relations between the notions quasi-asymptotical $c$-almost periodicity and remote $c$-almost periodicity; we introduce various notions of 
${\mathbb D}$-remotely $({\mathcal B},I',c)$-almost periodicity in Definition \ref{ris}. Our main aim is, in fact, to show how the already known results established for remotely almost periodic functions (see e.g., \cite{c-zhang1,s-zhang}) can be used in further analysis of quasi-asymptotically almost periodic functions  and how the already established results for quasi-asymptotically ($c$-)almost periodic functions (\cite{BIMV, brazil, multi-arx}) can be used for giving new characterizations of
remotely  ($c$-)almost periodic functions, and especially, for giving new important applications of this class of functions to the abstract Volterra integro-differential equations (see also Proposition \ref{zivotlajf}, Proposition \ref{gasdas1}, Example \ref{merak} and Example \ref{mp}). Therefore, this paper can be viewed as a certain addendum to the paper \cite{multi-arx}.
Further on, we provide several arguments showing that the result of \cite[Proposition 2.4]{s-zhang} is not correct and provide an example of a vector-valued slowly oscillating function without mean value. In the final section of paper, we provide certain
applications to the integro-differential equations; especially, in Subsection \ref{biology}, we continue our recent analysis from \cite{maulen}, a joint work with C. Maul\'en, S. Castillo and M. Pinto, by analyzing slowly oscillating solutions for the Richard-Chapman ordinary differential equation with an external perturbation, which plays an important role in mathematical biology. 

We use the standard notation throughout the paper. By $(X,\| \cdot \|)$ and
$(Y,\| \cdot \|_{Y})$ we denote two complex Banach spaces;
$L(X,Y)$ stands for the Banach algebra of all bounded linear operators from $X$ into
$Y$ with $L(X,X)$ being abbreviated to $L(X)$. 
By $\langle \cdot, \cdot \rangle$ we denote the usual inner product in ${\mathbb R}^{n}.$ If ${\bf t_{0}}\in {\mathbb R}^{n}$ and $\epsilon>0$, then we define $B({\bf t}_{0},\epsilon):=\{{\bf t } \in {\mathbb R}^{n} : |{\bf t}-{\bf t_{0}}| \leq \epsilon\},$
where $|\cdot|$ denotes the Euclidean norm in ${\mathbb R}^{n};$ $(e_{1},e_{2},...,e_{n})$ denotes the standard basis of ${\mathbb R}^{n}.$ Set 
${\mathbb N}_{n}:=\{1,..., n\}.$

We will always assume henceforth that ${\mathcal B}$ is a collection of non-empty subsets of $X$ such that, for  every $x\in X,$ there exists $B\in{\mathcal B}$ with $x\in B.$ 
Suppose that 
${\mathbb D} \subseteq I \subseteq {\mathbb R}^{n}$ and the set ${\mathbb D}$  is unbounded. By $C_{0,{\mathbb D},{\mathcal B}}(I \times X :Y)$ we denote the vector space consisting of all continuous functions $Q : I \times X \rightarrow Y$ such that, for every $B\in {\mathcal B},$ we have $\lim_{t\in {\mathbb D},|t|\rightarrow +\infty}Q({\bf t};x)=0,$ uniformly for $x\in B$ (\cite{marko-manuel-ap}).

The class of $(\omega,c)$-periodic functions was introduced by E. Alvarez, A. G\'omez, M. Pinto in \cite{alvarez1} and later reconsidered by  E. Alvarez, S. Castillo, M. Pinto in \cite{alvarez2} and E. Alvarez, S. Castillo, M. Pinto in \cite{alvarez3}, among many other research articles.
Before proceeding further, we need to
recall the definitions of an
$(S,{\mathbb D},{\mathcal B})$-asymptotically 
$(\omega,c)$-periodic function and an $(S,{\mathcal B})$-asymptotically $(\omega_{j},c_{j},{\mathbb D}_{j})_{j\in {\mathbb N}_{n}}$-periodic function
(cf. also M. T. Khalladi, M. Kosti\' c, M. Pinto, A. Rahmani, D. Velinov \cite[Definition 3.1]{BIMV} and Y.-K. Chang, Y. Wei \cite[Definition 3.1, Definition 3.2]{y-k chang} for the one-dimensional case, as well as M. Kosti\' c \cite{nds-2021} for the multi-dimensional case): 

\begin{defn}\label{drasko-presing-12345} (\cite{multi-arx})
Let ${\bf \omega}\in {\mathbb R}^{n} \setminus \{0\},$ $c\in {\mathbb C} \setminus \{0\},$
${\bf \omega}+I \subseteq I,$ ${\mathbb D} \subseteq I \subseteq {\mathbb R}^{n}$ and the set ${\mathbb D}$ be unbounded. A continuous
function $F:I \times X\rightarrow Y$ is said to be $(S,{\mathbb D},{\mathcal B})$-asymptotically 
$(\omega,c)$-periodic if and only if for each $B\in {\mathcal B}$ we have
\begin{align*}
\lim_{|{\bf t}|\rightarrow +\infty,{\bf t}\in {\mathbb D}}\bigl\| F({\bf t}+\omega ; x)-cF({\bf t} ; x)\bigr\|_{Y}=0,\quad \mbox{ uniformly in }x\in B.
\end{align*}
\end{defn}

\begin{defn}\label{drasko-presing123456} (\cite{multi-arx})
Let ${\bf \omega}_{j}\in {\mathbb R} \setminus \{0\},$ $c_{j}\in {\mathbb C} \setminus \{0\},$
${\bf \omega}_{j}e_{j}+I \subseteq I$,
${\mathbb D}_{j} \subseteq I \subseteq {\mathbb R}^{n}$ and the set ${\mathbb D}_{j}$ be unbounded ($1\leq j\leq n$).
A continuous
function $F:I \times X\rightarrow Y$ is said to be  $(S,{\mathcal B})$-asymptotically $({\bf \omega}_{j},c_{j},{\mathbb D}_{j})_{j\in {\mathbb N}_{n}}$-periodic if and only if for each $ j\in {\mathbb N}_{n}$ we have
\begin{align*}
\lim_{|{\bf t}|\rightarrow +\infty,{\bf t}\in {\mathbb D}_{j}}\bigl\|F({\bf t}+{\bf \omega}_{j}e_{j};x)-c_{j}F({\bf t};x)\bigr\|_{Y}=0,\quad \mbox{ uniformly in }x\in B.
\end{align*}
\end{defn} 

\section{Slowly oscillating type functions in ${\mathbb R}^{n}$}\label{so-ce}

We start this section by introducing the following notion (see also \cite[Definition 4.2.1, p. 247]{a43} for a slighly different notion of a one-dimensional slowly oscillating function, and \cite{sarason0} for the notion of a slowly oscillating function $f : [0,\infty) \rightarrow {\mathbb C}$ at $0$ and $+\infty$):

\begin{defn}\label{drasko-norm}
Let $c\in {\mathbb C} \setminus \{0\},$ $\emptyset \neq I \subseteq {\mathbb R}^{n},$ ${\mathbb D} \subseteq I \subseteq {\mathbb R}^{n}$ and the set ${\mathbb D}$ be unbounded.
Define
\begin{align*}
A_{I}:=\bigl\{ {\bf \omega} \in {\mathbb R}^{n} \setminus \{0\} : {\bf \omega}+I \subseteq I\bigr\}.
\end{align*}
Then we say that a continuous
function $F:I \times X\rightarrow Y$ is $({\mathbb D},{\mathcal B})$-slowly oscillating
if and only if for each $B\in {\mathcal B}$ and $\omega \in A_{I},$ we have
\begin{align}\label{loza}
\lim_{|{\bf t}|\rightarrow +\infty,{\bf t}\in {\mathbb D}}\bigl\| F({\bf t}+\omega ; x)-F({\bf t} ; x)\bigr\|_{Y}=0,\quad \mbox{ uniformly in }x\in B.
\end{align}
\end{defn}

In other words, a continuous
function $F:I \times X\rightarrow Y$ is $({\mathbb D},{\mathcal B})$-slowly oscillating if and only if $F(\cdot;\cdot)$ is $(S,{\mathbb D},{\mathcal B})$-asymptotically 
$(\omega,1)$-periodic for all $\omega \in A_{I}.$ Clearly, we have $kA_{I}\subseteq A_{I}$ for all $k\in {\mathbb N}.$ 

If $X\in {\mathcal B},$ then we omit the term ``${\mathcal B}$'' from the notation and, if ${\mathbb D}=I,$ then we omit the term ``${\mathbb D}$'' from the notation; for example, if ${\mathbb D}=I$ and $F:I \rightarrow Y$ is $({\mathbb D},{\mathcal B})$-slowly oscillating with $X=\{0\}$, then we simply say that the function $F(\cdot)$ is slowly oscillating.

Now we would like to note that it is not so logical to study the class of $({\mathbb D},{\mathcal B},c)$-slowly oscillating functions
by replacing the term $\| F({\bf t}+\omega ; x)-F({\bf t} ; x)\|_{Y}$ in \eqref{loza} by the term $\| F({\bf t}+\omega ; x)-cF({\bf t} ; x)\|_{Y},$ where $c\in {\mathbb C} \setminus \{0\}.$ In actual fact, we have the following result which is clearly applicable if ${\mathbb D}=I=[0,\infty)^{n}$ or ${\mathbb D}=I={\mathbb R}^{n}$:

\begin{prop}\label{koq}
Let $c\in {\mathbb C} \setminus \{0\},$ $\emptyset \neq I \subseteq {\mathbb R}^{n},$ ${\mathbb D} \subseteq I \subseteq {\mathbb R}^{n}$ and the set ${\mathbb D}$ be unbounded.
Suppose that $A_{I}\subseteq 2A_{I}$ and $\omega'+{\mathbb D} \subseteq {\mathbb D}$ for all $\omega'\in A_{I}/2.$ 
Then the following holds:
\begin{itemize}
\item[(i)] If a continuous function $F : I \times X \rightarrow Y$ is $({\mathbb D},{\mathcal B},c)$-slowly oscillating, then $F\in C_{0,{\mathbb D},{\mathcal B}}(I \times X :Y).$
\item[(ii)] If, in addition to the above, we have $\omega +{\mathbb D} \subseteq {\mathbb D}$ for all $\omega \in A_{I},$ then  a continuous function $F : I \times X \rightarrow Y$ is $({\mathbb D},{\mathcal B},c)$-slowly oscillating if and only if $F\in C_{0,{\mathbb D},{\mathcal B}}(I \times X :Y).$
\end{itemize}
\end{prop}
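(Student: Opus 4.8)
The plan is to handle the two parts separately, with part (i) carrying essentially all of the work; part (ii) is then a short triangle-inequality argument that, combined with (i), gives the equivalence. Throughout I would fix $B\in\mathcal{B}$ once and for all and abbreviate by saying that a quantity \emph{vanishes along} $\mathbb{D}$ if its $Y$-norm tends to $0$ as $|t|\to+\infty$, $t\in\mathbb{D}$, uniformly for $x\in B$. The two standing hypotheses are exactly what is needed to run a ``halving'' argument: $A_I\subseteq 2A_I$ says that every $\omega\in A_I$ satisfies $\omega/2\in A_I$, and $\omega'+\mathbb{D}\subseteq\mathbb{D}$ for $\omega'\in A_I/2$ guarantees that I may translate the running point $t\in\mathbb{D}$ by such a half-period and remain in $\mathbb{D}$.

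For part (i), fix $\omega\in A_I$ and set $\eta:=\omega/2\in A_I$. Since $F$ is $(\mathbb{D},\mathcal{B},c)$-slowly oscillating, writing $o(1)$ for quantities vanishing along $\mathbb{D}$ uniformly in $x\in B$, I have both
\begin{align*}
F(t+\eta;x)-cF(t;x)&=o(1),\\
F(t+\omega;x)-cF(t;x)&=o(1),
\end{align*}
the first for the shift $\eta$ and the second for the shift $\omega=2\eta$. Next I translate $t\mapsto t+\eta$ in the first relation; this is legitimate precisely because $\eta\in A_I/2$, so $\eta+\mathbb{D}\subseteq\mathbb{D}$ and $|t+\eta|\to+\infty$ whenever $|t|\to+\infty$, whence $F(t+2\eta;x)-cF(t+\eta;x)=o(1)$. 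Multiplying the first relation by $c$ and adding it to this translated relation cancels the $cF(t+\eta;x)$ terms and gives $F(t+\omega;x)-c^{2}F(t;x)=o(1)$. Subtracting the second relation then isolates $c(1-c)F(t;x)=o(1)$; since $c\neq 0$ and $c\neq 1$, I divide by the nonzero scalar $c(1-c)$ to conclude that $F(t;x)$ vanishes along $\mathbb{D}$ uniformly in $x\in B$. As $B\in\mathcal{B}$ was arbitrary and $F$ is continuous by hypothesis, this yields $F\in C_{0,\mathbb{D},\mathcal{B}}(I\times X:Y)$.

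For part (ii), the added hypothesis $\omega+\mathbb{D}\subseteq\mathbb{D}$ for all $\omega\in A_I$ supplies the converse directly. Assuming $F\in C_{0,\mathbb{D},\mathcal{B}}(I\times X:Y)$, fix $\omega\in A_I$ and $B$; then $\|F(t;x)\|_{Y}$ vanishes along $\mathbb{D}$, and so does $\|F(t+\omega;x)\|_{Y}$, because $t+\omega$ again runs through $\mathbb{D}$ with $|t+\omega|\to+\infty$. The triangle inequality
$\|F(t+\omega;x)-cF(t;x)\|_{Y}\le\|F(t+\omega;x)\|_{Y}+|c|\,\|F(t;x)\|_{Y}$
then forces the left-hand side to vanish along $\mathbb{D}$, which is exactly $(\mathbb{D},\mathcal{B},c)$-slow oscillation. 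Together with part (i) this gives the stated equivalence.

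The only delicate point, and the place I expect to have to argue carefully, is the translation step in part (i): deducing $F(t+2\eta;x)-cF(t+\eta;x)=o(1)$ from the untranslated relation requires both $\eta+\mathbb{D}\subseteq\mathbb{D}$ and $|t+\eta|\to+\infty$, which is exactly where $A_I\subseteq 2A_I$ and the invariance of $\mathbb{D}$ under $A_I/2$ become indispensable. Everything else reduces to linear algebra in $Y$ and the elementary fact that a finite sum of quantities vanishing uniformly along $\mathbb{D}$ again vanishes uniformly. The substantive use of $c\neq 1$ enters only at the final division by $c(1-c)$, and this is precisely the mechanism by which the class collapses to $C_{0,\mathbb{D},\mathcal{B}}$ for such $c$.
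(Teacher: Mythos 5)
Your proof is correct and follows essentially the same route as the paper's: the same halving of $\omega$ via $A_I\subseteq 2A_I$, the same telescoping identity $F({\bf t}+2\eta;x)-c^{2}F({\bf t};x)=[F({\bf t}+2\eta;x)-cF({\bf t}+\eta;x)]+c[F({\bf t}+\eta;x)-cF({\bf t};x)]$ justified by $(A_I/2)+{\mathbb D}\subseteq{\mathbb D}$, and the same cancellation leaving $c(1-c)F({\bf t};x)\to 0$. Part (ii) likewise matches the paper's triangle-inequality argument.
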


\begin{proof}
To prove (i), suppose that $\omega'\in A_{I}$ and $B\in {\mathcal B};$ then there exists $\omega \in A_{I}$ such that $\omega'=2\omega.$ We have (${\bf t}\in I;$ $x\in B$):
\begin{align*}
F\bigl({\bf t} &+\omega '; x\bigr)-c^{2}F({\bf t};x)=F\bigl({\bf t} +2\omega ; x\bigr)-c^{2}F({\bf t}; x)
\\&=\bigl[ F\bigl({\bf t}+2\omega ; x\bigr)- c F\bigl({\bf t}+\omega ;x\bigr)\bigr]+c\bigl[ F\bigl({\bf t}+\omega ; x\bigr)- c F\bigl({\bf t};x\bigr)\bigr].
\end{align*}
The prescribed assumption $(A_{I}/2)+{\mathbb D}\subseteq {\mathbb D}$ implies ${\bf t}+\omega \in {\mathbb D},$ ${\bf t}\in {\mathbb D}$ and
$$
\lim_{|{\bf t}|\rightarrow +\infty,{\bf t}\in {\mathbb D}}\bigl\| F\bigl({\bf t}+\omega' ; x\bigr)-c^{2}F({\bf t} ; x)\bigr\|_{Y}=0,\quad \mbox{ uniformly in }x\in B.
$$
Subtracting the terms in the above limit equality and the limit equality \eqref{loza}, with the number $\omega$ replaced therein with the number $\omega'$, we get
$$
\lim_{|{\bf t}|\rightarrow +\infty,{\bf t}\in {\mathbb D}}\bigl\| \bigl(c^{2}-c\bigr) \cdot F({\bf t} ; x)\bigr\|_{Y}=0,\quad \mbox{ uniformly in }x\in B.
$$
This immediately implies (i) since $c\neq 1.$ To prove (ii), it suffices to apply (i) and observe that the assumption $\omega +{\mathbb D} \subseteq {\mathbb D}$ for all $\omega \in A_{I}$ implies
$$
\lim_{|{\bf t}|\rightarrow +\infty,{\bf t}\in {\mathbb D}}\bigl\| F({\bf t}+\omega ; x)\bigr\|_{Y}=0,\quad \mbox{ uniformly in }x\in B.
$$
\end{proof} 

Concerning the notion of a $({\mathbb D},{\mathcal B})$-slowly oscillating function, we would like to note that we do not require any kind of boundedness of function $F(\cdot;\cdot)$ here. In the classical approach, developed by D. Sarason \cite{sarason1} for the functions of form $f: [0,\infty) \rightarrow {\mathbb C},$ the boundedness of function $f(\cdot)$ is required a priori, which is not a direct consequence of definition since the function $f(t):=t^{\alpha},$ $ t\geq 0$ satisfies \eqref{loza} if $\alpha \in (0,1);$ the boundedness is obtained by applying the function $e^{i\cdot}$ after that (in other words, the function $t\mapsto e^{it^{\alpha}},$ $t\geq 0$ is slowly oscillating in the sense of \cite{sarason1}, for any $\alpha \in (0,1)$).
It is also worth noting that the global boundedness of function $f(\cdot)$ has not been used in the proof of \cite[Proposition 1]{sarason1}, as well as that the argumentation contained in the proof of this theorem can serve one to deduce the following result in the multi-dimensional setting; we will include all details of proof for the sake of completeness:

\begin{prop}\label{whiskey}
Suppose that $\emptyset \neq I \subseteq {\mathbb R}^{n}$ is an unbounded, closed set and the function $F: I\rightarrow Y$ is slowly oscillating.
Suppose, further, that the following condition holds:
\begin{itemize}
\item[(C)] For every $r>0$ and $\delta>0$, for every points ${\bf t},\ {\bf t}'\in I\setminus I_{r}$ with $|{\bf t}-{\bf t}'|<\delta$, and for every point $z\in (A_{I}+{\bf t}- {\bf t}') \cup  (A_{I}+{\bf t}'- {\bf t}) ,$ there exists $\eta_{z}>0$ such that $B(z,\eta_{z})\subseteq A_{I}.$ Here, $I_{r}\equiv \{ {\bf t}\in I :  |{\bf t}|\leq r\}$ ($r>0$).
\end{itemize}
Then the function $F(\cdot)$ is uniformly continuous.
\end{prop}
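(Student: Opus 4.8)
The plan is to adapt Sarason's original one-dimensional argument to the multi-dimensional closed set $I$. The strategy is a proof by contradiction: suppose $F(\cdot)$ fails to be uniformly continuous. Then there exist $\epsilon>0$ and two sequences $({\bf t}_{k})$, $({\bf t}'_{k})$ in $I$ with $|{\bf t}_{k}-{\bf t}'_{k}|\rightarrow 0$ yet $\|F({\bf t}_{k})-F({\bf t}'_{k})\|_{Y}\geq \epsilon$ for all $k$. First I would observe that these points must escape to infinity: since $F$ is continuous on the closed set $I$, it is uniformly continuous on every bounded portion $I_{r}$, so the pairs witnessing the failure cannot remain in any $I_{r}$. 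Thus, passing to a subsequence, I may assume $|{\bf t}_{k}|\rightarrow +\infty$ and the points lie in $I\setminus I_{r}$ for $r$ as large as needed.

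The heart of the matter is to convert the small spatial displacement ${\bf t}_{k}-{\bf t}'_{k}$ into an admissible shift on which the slowly oscillating property can be invoked. Here condition (C) is exactly what makes this work: it guarantees that near any relevant point of the form ${\bf t}_{k}-{\bf t}'_{k}$ shifted by an element of $A_{I}$, there is a whole ball contained in $A_{I}$, so that one can choose a genuine oscillation vector $\omega\in A_{I}$ close to $-({\bf t}_{k}-{\bf t}'_{k})$ (or use the pair $z$ living in $(A_{I}+{\bf t}_{k}-{\bf t}'_{k})\cup(A_{I}+{\bf t}'_{k}-{\bf t}_{k})$). Concretely, I would fix one $\omega\in A_{I}$, set $z:=\omega+{\bf t}_{k}-{\bf t}'_{k}\in A_{I}$ (possible by (C), after shrinking $|{\bf t}_{k}-{\bf t}'_{k}|$ below the relevant $\delta$), and then write the telescoping identity
\begin{align*}
F({\bf t}_{k})-F({\bf t}'_{k})
&=\bigl[F({\bf t}_{k})-F({\bf t}_{k}+\omega)\bigr]
+\bigl[F({\bf t}'_{k}+z)-F({\bf t}'_{k})\bigr],
\end{align*}
where I have used ${\bf t}_{k}+\omega={\bf t}'_{k}+z$. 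Both bracketed differences are of the form $F({\bf s}+\text{(element of }A_{I}))-F({\bf s})$ with $|{\bf s}|\rightarrow +\infty$, so by the slowly oscillating hypothesis applied to the fixed shifts $\omega$ and $z$ each tends to $0$. This forces $\|F({\bf t}_{k})-F({\bf t}'_{k})\|_{Y}\rightarrow 0$, contradicting the lower bound $\epsilon$.

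The main obstacle I anticipate is the uniformity in the shift vector: the slowly oscillating condition \eqref{loza} gives decay for each \emph{fixed} $\omega\in A_{I}$, whereas the displacement $z$ (hence the second shift) varies with $k$. To handle this I would exploit that $|{\bf t}_{k}-{\bf t}'_{k}|\rightarrow 0$, so the vectors $z_{k}$ accumulate at the single fixed point $\omega$; combining the ball $B(\omega,\eta)\subseteq A_{I}$ furnished by (C) with the elementary fact that $F$ restricted to a neighborhood can be controlled via its values along nearby admissible shifts, one reduces the varying-shift estimate to finitely many fixed shifts plus a continuity correction. Making this reduction precise — essentially showing that the decay in \eqref{loza} is locally uniform in $\omega$ over the ball $B(\omega,\eta)$ — is the delicate step, and it is precisely here that the closedness of $I$ and the ball condition (C) are indispensable.
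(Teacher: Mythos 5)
Your reduction to pairs escaping to infinity (via compactness of $I\cap B(0,r)$, using closedness of $I$) and your telescoping identity $F({\bf t}_{k})-F({\bf t}'_{k})=[F({\bf t}_{k})-F({\bf t}_{k}+\omega)]+[F({\bf t}'_{k}+z_{k})-F({\bf t}'_{k})]$ with $z_{k}=\omega+{\bf t}_{k}-{\bf t}'_{k}\in A_{I}$ are exactly the right ingredients, and they match the paper's starting point. But the proof is not complete: the step you yourself flag as ``delicate'' --- that the decay in \eqref{loza} is locally uniform in the shift over a ball $B(\omega,\eta)\subseteq A_{I}$ --- is essentially the content of the proposition, and your proposed route to it is circular. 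The second bracket carries a shift $z_{k}$ that changes with $k$; to compare $F({\bf t}'_{k}+z_{k})$ with $F({\bf t}'_{k}+\omega)$ using only $|z_{k}-\omega|\rightarrow 0$ you would need uniform continuity of $F$ (or an equicontinuity statement equivalent to it), which is the conclusion being proven. Pointwise convergence in \eqref{loza} for each fixed shift does not upgrade for free to uniform convergence over a ball of shifts, and ``finitely many fixed shifts plus a continuity correction'' is exactly the move that is unavailable here.

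The paper closes this gap with a nested-balls (Cantor intersection) construction, which is the idea missing from your proposal. Assuming non-uniform continuity, one inductively builds base points ${\bf t}_{k}\in I$ with $|{\bf t}_{k}|\rightarrow\infty$, centers $\omega_{k}\in A_{I}$ and radii $\eta_{k}\rightarrow 0$ with nested closed balls contained in $A_{I}$, such that $\| F({\bf t}_{k}+{\bf t})-F({\bf t}_{k})\|_{Y}\geq\epsilon/2$ for \emph{every} ${\bf t}$ in the $k$-th ball; the induction step uses precisely your telescoping identity to conclude that at least one of the two brackets exceeds $\epsilon/2$ in norm, and then recenters at $\omega_{k}$ or at $\omega_{k}\pm({\bf t}_{k}-{\bf t}'_{k})$, where condition (C) guarantees that a small ball around the new center still lies in $A_{I}$. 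Cantor's theorem then produces a single ${\bf t}'\in\bigcap_{k}B(\omega_{k},\eta_{k})\subseteq A_{I}$, a \emph{fixed} admissible shift for which $\| F({\bf t}_{k}+{\bf t}')-F({\bf t}_{k})\|_{Y}\geq\epsilon/2$ for all $k$, contradicting \eqref{loza}. Without this (or some equivalent Baire-type extraction of one shift that works for infinitely many $k$), your argument does not go through.
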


\begin{proof}
Suppose that the function $F(\cdot)$ is not uniformly continuous. Since the set $I$ is closed, the set $I\cap B(0,r)$ is compact for all positive real numbers $r>0;$ hence, 
the following holds:
\begin{itemize}
\item[(D)] 
There exists a positive real number $\epsilon>0$ such that, for every positive real numbers
$\delta>0$ and $r>0,$ there exist ${\bf t},\ {\bf t}'\in I  \setminus I_{r}$ such that $|{\bf t}-{\bf t}'|<\delta$ and $\| F({\bf t})-F({\bf t}')\|_{Y}>\epsilon.$
\end{itemize}
Using conditions (C) and (D), as well as the fact that the function $F(\cdot)$ is slowly oscillating, we can inductively construct the sequences $(\omega_{k})$ in $ A_{I},$
$({\bf t}_{k})$
in $I$ 
and $(\eta_{k})$ in $(0,\infty)$
such that
$\lim_{k\rightarrow +\infty}\eta_{k}=0,$ $\lim_{k\rightarrow \infty}|{\bf t}_{k}|=+\infty,$
$B(\omega_{k},\eta_{k})\subseteq B(\omega_{k+1},\eta_{k+1})\subseteq A_{I},$ $k\in {\mathbb N}$ and
$\| F({\bf t}_{k})-F({\bf t}_{k}+{\bf t})\|_{Y}\geq \epsilon/2,$ provided $k\in {\mathbb N}$
and ${\bf t}\in B(\omega_{k},\eta_{k});$ it is only worth noting here that, in each step of this construction, we can choose the point
$\omega_{k+1}$ to be $\omega_{k}+({\bf t_{k}}-{\bf t}_{k}')$ or $\omega_{k}+({\bf t_{k}}'-{\bf t}_{k})$, where the points ${\bf t}_{k}$ and ${\bf t}_{k}'$ are already chosen points from $I$ with sufficiently large absolute values, satisfying additionally that $\| F({\bf t}_{k})-F({\bf t}_{k}')\|_{Y}>\epsilon$ and $|{\bf t_{k}}'-{\bf t}_{k}|\leq 1/k.$ 
Due to the Cantor theorem, there exists a unique number ${\bf t}'\in \bigcap_{k\in {\mathbb N}} B(\omega_{k},\eta_{k}).$
This implies $\| F({\bf t}_{k})-F({\bf t}_{k}+{\bf t}')\|_{Y}\geq \epsilon/2$ for all $k\in {\mathbb N},$
which is a contradiction since the function $F(\cdot)$ is slowly oscillating and ${\bf t}'\in A_{I}.$
\end{proof}

Using this result, the interested reader may simply transfer the statement of \cite[Proposition 2]{sarason1} to the higher dimensions, as well; details can be left to the interested readers. For more details about the life and professional work of D. Sarason, we refer the reader to the communication paper \cite{dsar} by S. R. Garcia.

We continue by observing that, 
in the infinite-dimensional setting, there exists a bounded, uniformly continuous, slowly oscillating function $F: [0,\infty) \rightarrow Y$ whose range is not relatively compact in $Y$:

\begin{example}\label{pier} (\cite{pierro,brazil})
Let $Y:=c_{0}$ be the space of all numerical sequences tending to zero, equipped with the sup-norm. Set
$$
F(t):=\Biggl(\frac{4n^{2}t^{2}}{(t^{2}+n^{2})^{2}} \Biggr)_{n\in {\mathbb N}},\ t\geq 0.
$$
Then the function $F(\cdot)$ is bounded, uniformly continuous and satisfies 
\begin{align*}
\|F(t+\tau)-F(t)\|\leq t^{-4}+4\frac{\tau^{2}}{t^{2}},\ t>0,\ \tau\geq 0,
\end{align*}
so that the function $F(\cdot)$ is slowly oscillating. 
We already know that the range of $F(\cdot)$ is not 
relatively compact in $Y.$
\end{example}

The following notion is also meaningful:

\begin{defn}\label{drasko-arx}
Let 
${\mathbb D}_{j} \subseteq I \subseteq {\mathbb R}^{n}$ and the set ${\mathbb D}_{j}$ be unbounded ($1\leq j\leq n$).
Define
\begin{align*}
B_{I}:=\bigl\{ (\omega_{1},...,  \omega_{n}) \in ({\mathbb R} \setminus \{0\})^{n} :  \omega_{j}e_{j}+I \subseteq I \mbox{ for all }j\in {\mathbb N}_{n}\bigr\}.
\end{align*}
Then we say that a continuous
function $F:I \times X\rightarrow Y$ is
$({\mathcal B},({\mathbb D}_{j})_{j\in {\mathbb N}_{n}})$-slowly oscillating if and only if for each $(\omega_{1},...,\omega_{n})\in B_{I}$ and $ j\in {\mathbb N}_{n}$ we have
\begin{align*}
\lim_{|{\bf t}|\rightarrow +\infty,{\bf t}\in {\mathbb D}_{j}}\bigl\|F({\bf t}+{\bf \omega}_{j}e_{j};x)-F({\bf t};x)\bigr\|_{Y}=0,\quad \mbox{ uniformly in }x\in B.
\end{align*}
\end{defn} 

In other words, a continuous
function $F:I \times X\rightarrow Y$ is $({\mathcal B},({\mathbb D}_{j})_{j\in {\mathbb N}_{n}})$-slowly oscillating if and only if $F(\cdot;\cdot)$ is $(S,{\mathcal B})$-asymptotically $({\bf \omega}_{j},1,{\mathbb D}_{j})_{j\in {\mathbb N}_{n}}$-periodic for all tuples $({\bf \omega}_{1},..., {\bf \omega}_{n})\in B_{I}.$ Clearly, we have $kB_{I}\subseteq B_{I}$ for all $k\in {\mathbb N}.$

In \cite{multi-arx}, we have  investigated the following topics in connection with $(S,{\mathbb D},{\mathcal B})$-asymptotically 
$(\omega,c)$-periodic functions and $(S,{\mathcal B})$-asymptotically $({\bf \omega}_{j},c_{j},{\mathbb D}_{j})_{j\in {\mathbb N}_{n}}$-periodic functions:
\begin{itemize}
\item[(i)] the invariance under the operation of uniform convergence,
\item[(ii)] the convolution invariance,
\item[(iii)] the invariance under reflections at zero,
\item[(iv)] the translation invariance,
\item[(v)] the pointwise products with the scalar-valued functions of the same type, etc.
\end{itemize}
All these statements can be simply reformulated for the notion introduced in Definition \ref{drasko-norm} and Definition \ref{drasko-arx} (with $c=1$; $c_{j}=1$ for all $j\in {\mathbb N}_{n}$). We will 
skip all applications based on the use of results concerning the above-mentioned topics, like those established to d'Alembert formula and the heat equation in ${\mathbb R}^{n}$; see \cite{multi-arx} for more details.

\section{On the relations between quasi-asymptotical $c$-almost periodicity and remote $c$-almost periodicity}\label{qaap}

In this section, we will first remind the readers of the notion of quasi-asymptotical $c$-almost
periodicity (we do not use the assumption $I'\subseteq I$ here; cf. also \cite[Definition 3.3]{BIMV} for the one-dimensional setting):

\begin{defn}\label{nakaza} (\cite{multi-arx})
Suppose that 
${\mathbb D} \subseteq I \subseteq {\mathbb R}^{n}$, $\emptyset  \neq I \subseteq {\mathbb R}^{n},$ 
$\emptyset  \neq I'\subseteq {\mathbb R}^{n},$ 
the sets ${\mathbb D}$ and $I'$ are unbounded,
$F : I \times X \rightarrow Y$ is a continuous function and $I +I' \subseteq I.$ Then we say that:
\begin{itemize}
\item[(i)]\index{function!${\mathbb D}$-quasi-asymptotically Bohr $({\mathcal B},I',c)$-almost periodic}
$F(\cdot;\cdot)$ is ${\mathbb D}$-quasi-asymptotically $({\mathcal B},I',c)$-almost periodic if and only if for every $B\in {\mathcal B}$ and $\epsilon>0$
there exists $l>0$ such that for each ${\bf t}_{0} \in I'$ there exists ${\bf \tau} \in B({\bf t}_{0},l) \cap I'$ such that there exists a finite real number $M(\epsilon,\tau)>0$ such that
\begin{align}\label{emojmarko145m}
\bigl\|F({\bf t}+{\bf \tau};x)-cF({\bf t};x)\bigr\|_{Y} \leq \epsilon,\mbox{ provided } {\bf t},\ {\bf t}+\tau \in {\mathbb D}_{M(\epsilon,\tau)},\ x\in B.
\end{align}
\item[(ii)] \index{function!${\mathbb D}$-quasi-asymptotically $({\mathcal B},I',c)$-uniformly recurrent}
$F(\cdot;\cdot)$ is ${\mathbb D}$-quasi-asymptotically $({\mathcal B},I',c)$-uniformly recurrent if and only if for every $B\in {\mathcal B}$
there exist a sequence $({\bf \tau}_{k})$ in $I'$ and a sequence $(M_{k})$ in $(0,\infty)$ such that $\lim_{k\rightarrow +\infty} |{\bf \tau}_{k}|=\lim_{k\rightarrow +\infty}M_{k}=+\infty$ and
\begin{align}\label{tastapub}
\lim_{k\rightarrow +\infty}\sup_{{\bf t},{\bf t}+{\bf \tau}_{k}\in {\mathbb D}_{M_{k}};x\in B} \bigl\|F({\bf t}+{\bf \tau}_{k};x)-c F({\bf t};x)\bigr\|_{Y} =0.
\end{align}
\end{itemize}

If $I'=I,$ then we also say that
$F(\cdot;\cdot)$ is ${\mathbb D}$-quasi-asymptotically $({\mathcal B},c)$-almost periodic (${\mathbb D}$-quasi-asymptotically $({\mathcal B},c)$-uniformly recurrent); furthermore, if $X\in {\mathcal B},$ then it is also said that $F(\cdot;\cdot)$ is ${\mathbb D}$-quasi-asymptotically $(I',c)$-almost periodic (${\mathbb D}$-quasi-asymptotically $(I',c)$-uniformly recurrent). If $I'=I$ and $X\in {\mathcal B}$, then we also say that $F(\cdot;\cdot)$ is ${\mathbb D}$-quasi-asymptotically  $c$-almost periodic (${\mathbb D}$-quasi-asymptotically $c$-uniformly recurrent). We remove the prefix ``${\mathbb D}$-'' in the case that ${\mathbb D}=I$, remove the prefix ``$({\mathcal B},)$''  in the case that $X\in {\mathcal B}$ and remove the prefix ``$c$-'' if $c=1.$ We will accept these terminological agreements for the notion introduced in Definition \ref{ris} below, as well.
\end{defn}

Now we would like to take  a closer look at the equations \eqref{emojmarko145m} and \eqref{tastapub}. We first observe that it is completely irrelevant whether we will write that there exists a finite real number $M(\epsilon,\tau)>0$ such that \eqref{emojmarko145m} holds, or more concisely,
\begin{align}\label{emojmarko145m-rem}
\limsup_{|{\bf t}|\rightarrow +\infty,{\bf t}\in {\mathbb D}}\sup_{x\in B}\bigl\|F({\bf t}+{\bf \tau};x)-cF({\bf t};x)\bigr\|_{Y}\leq \epsilon,
\end{align}
i.e.,
\begin{align*}
\lim_{s\rightarrow +\infty}\sup_{|{\bf t}|\geq s,{\bf t}\in {\mathbb D}; x\in B}\bigl\|F({\bf t}+{\bf \tau};x)-cF({\bf t};x)\bigr\|_{Y} \leq \epsilon.
\end{align*}
It is also very simple to show that it is completely irrelevant whether we will write that there exists a finite real number $M(\epsilon,\tau)>0$ such that \eqref{tastapub} holds, or more concisely,
$$
\lim_{k\rightarrow +\infty}\limsup_{|{\bf t}|\rightarrow +\infty,{\bf t}\in {\mathbb D}}
\sup_{x\in B}\bigl\|F({\bf t}+{\bf \tau}_{k};x)-c F({\bf t};x)\bigr\|_{Y} =0,
$$
i.e.,
$$
\lim_{k\rightarrow +\infty}\lim_{s\rightarrow +\infty}\sup_{|{\bf t}|\geq s, {\bf t}\in {\mathbb D}; x\in B}\bigl\|F({\bf t}+{\bf \tau}_{k};x)-c F({\bf t};x)\bigr\|_{Y} =0.
$$

The special case $c=1,$ $X\in {\mathcal B}$ and ${\mathbb D}=I=I'={\mathbb R}^{n}$ has been considered in \cite{BIMV}, \cite{brazil} and \cite{multi-arx}, where a ${\mathbb D}$-quasi-asymptotically Bohr $({\mathcal B},I',c)$-almost periodic function is simply called quasi-asymptotically almost periodic. In this case, the above consideration shows that the notion of quasi-asymptotical almost periodicity is equivalent with the notion of remote almost periodicity considered by F. Yang and C. Zhang in \cite[Definition 1.1; (1) and (3)]{f-yang}; see also the pioneering paper \cite{sarason}, where D. Sarason has analyzed the complex-valued remotely almost periodic functions defined on the real line, and the paper \cite{c-zhang}, where C. Zhang and L. Jiang
have analyzed the class of remotely almost periodic sequences (see also \cite{rabin}).

As our former analyses show (see also the research article \cite{xie} by R. Xie and C. Zhang), a quasi-asymptotically almost periodic function $F : {\mathbb R}^{n} \rightarrow {\mathbb C}$ need not be uniformly continuous, so that the notion introduced in 
\cite[Definition 1.1; (2)]{f-yang} is not satisfactory to a certain extent (see also S. Zhang, D. Piao \cite[Definition 2.1]{s-zhang} and the first sentence after \cite[Definition 1.1]{f-yang}, where the authors have assumed a priori that a remotely almost periodic function $F:  {\mathbb R}^{n} \rightarrow X$ is uniformly continuous).
It is our strong belief that it is much better to analyze both: the general classes of ${\mathbb D}$-quasi-asymptotically Bohr $({\mathcal B},I',c)$-almost periodic type functions which are not uniformly continuous on ${\mathcal B}$ and the corresponding classes of 
${\mathbb D}$-quasi-asymptotically Bohr $({\mathcal B},I',c)$-almost periodic type functions which are uniformly continuous on ${\mathcal B}$ (in a certain sense):

\begin{defn}\label{ris}
Suppose that $F : I \times X \rightarrow Y$ is a continuous function.
\begin{itemize}
\item[(i)] It is said that $F(\cdot;\cdot)$ is ${\mathbb D}$-remotely $({\mathcal B},I',c)$-almost periodic if and only if $F(\cdot;\cdot)$ is ${\mathbb D}$-quasi-asymptotically Bohr $({\mathcal B},I',c)$-almost periodic and for each $B\in {\mathcal B}$ the function $F(\cdot;\cdot)$ is uniformly continuous on $I\times B;$ that is
\begin{align*}
(\forall B\in {\mathcal B})& \ (\forall \epsilon>0) \ (\exists \delta > 0) \ (\forall {\bf t'},\ {\bf t''} \in I)\ \bigl(\forall x',\ x''\in B\bigr)
\\& \Biggl( \bigl| {\bf t'}-{\bf t''} \bigr|+\bigl \| x-x' \bigr\|<\delta \Rightarrow \Bigl\| F\bigl({\bf t'} ; x'\bigr)-F\bigl({\bf t''} ; x''\bigr)\Bigr\|_{Y}<\epsilon  \Biggr).
\end{align*}
\item[(ii)] It is said that $F(\cdot;\cdot)$ is ${\mathbb D}$-remotely $({\mathcal B},I',c)$-uniformly recurrent if and only if $F(\cdot;\cdot)$ is ${\mathbb D}$-quasi-asymptotically Bohr $({\mathcal B},I',c)$-uniformly recurrent and for each $B\in {\mathcal B}$ the function $F(\cdot;\cdot)$ is uniformly continuous on $I\times B.$
\item[(iii)] It is said that $F(\cdot;\cdot)$ is ${\mathbb D}$-remotely $({\mathcal B},I',c)$-almost periodic of type $1$ if and only if $F(\cdot;\cdot)$ is ${\mathbb D}$-quasi-asymptotically Bohr $({\mathcal B},I',c)$-almost periodic and
\begin{align}
\notag (\forall B\in {\mathcal B})& \ (\forall \epsilon>0) \ (\exists \delta > 0) \ (\forall {\bf t'},\ {\bf t''} \in I)\ (\forall x\in B)
\\\label{sqwa}& \Biggl( \bigl| {\bf t'}-{\bf t''} \bigr|<\delta \Rightarrow \Bigl\| F\bigl({\bf t'} ; x\bigr)-F\bigl({\bf t''} ; x\bigr)\Bigr\|_{Y}<\epsilon  \Biggr).
\end{align}
\item[(iv)] It is said that $F(\cdot;\cdot)$ is ${\mathbb D}$-remotely $({\mathcal B},I',c)$-uniformly recurrent of type $1$ if and only if $F(\cdot;\cdot)$ is ${\mathbb D}$-quasi-asymptotically Bohr $({\mathcal B},I',c)$-uniformly recurrent and \eqref{sqwa} holds.
\end{itemize}
\end{defn}

It is clear that any ${\mathbb D}$-remotely $({\mathcal B},I',c)$-almost periodic (${\mathbb D}$-remotely $({\mathcal B},I',c)$-uniformly recurrent) function is ${\mathbb D}$-remotely $({\mathcal B},I',c)$-almost periodic of type $1$ (${\mathbb D}$-remotely $({\mathcal B},I',c)$-uniformly recurrent) of type $1$. The converse statement holds provided that the function $F(\cdot;\cdot)$ is Lipshitzian with respect to the second argument:

\begin{prop}\label{zivotlajf}
Suppose that $F : I \times X \rightarrow Y$ is a continuous function and for each set $B\in {\mathcal B}$ there exists a finite real constant $L_{B}>0$ such that 
\begin{align}\label{lips-lips}
\bigl\| F\bigl({\bf t};x'\bigr)-F\bigl({\bf t};x''\bigr)\bigr\|_{Y} \leq L_{B}\bigl\| x'-x''\bigr\|,\quad {\bf t}\in I,\ x',\ x'' \in  B.
\end{align}
If $F(\cdot;\cdot)$ is ${\mathbb D}$-remotely $({\mathcal B},I',c)$-almost periodic of type $1$ (${\mathbb D}$-remotely $({\mathcal B},I',c)$-uniformly recurrent of type $1$), then $F(\cdot;\cdot)$ is ${\mathbb D}$-remotely $({\mathcal B},I',c)$-almost periodic (${\mathbb D}$-remotely $({\mathcal B},I',c)$-uniformly recurrent).
\end{prop}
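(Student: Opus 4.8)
The plan is to observe that the two definitions in Definition \ref{ris} differ \emph{only} in their continuity requirement: the $\mathbb{D}$-quasi-asymptotic Bohr $(\mathcal{B},I',c)$-almost periodicity (respectively, uniform recurrence) hypothesis appears verbatim both in the ``of type $1$'' version and in the target statement. Hence that part of the conclusion is immediate, and the entire content of the proposition is to upgrade the ``separated'' uniform continuity \eqref{sqwa} (where the same second argument $x$ sits in both slots) to genuine joint uniform continuity of $F(\cdot;\cdot)$ on $I\times B$ for every $B\in\mathcal{B}$. The Lipschitz hypothesis \eqref{lips-lips} is exactly what bridges this gap.

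First I would fix $B\in\mathcal{B}$ and $\epsilon>0$. Applying \eqref{sqwa} with $\epsilon/2$ in place of $\epsilon$ produces some $\delta_{1}>0$, and I would set $\delta:=\min\{\delta_{1},\ \epsilon/(2L_{B})\}$. Then, for arbitrary ${\bf t}',{\bf t}''\in I$ and $x',x''\in B$ with $|{\bf t}'-{\bf t}''|+\|x'-x''\|<\delta$, the key step is the triangle-inequality split
\[
\bigl\|F({\bf t}';x')-F({\bf t}'';x'')\bigr\|_{Y}\leq \bigl\|F({\bf t}';x')-F({\bf t}'';x')\bigr\|_{Y}+\bigl\|F({\bf t}'';x')-F({\bf t}'';x'')\bigr\|_{Y}.
\]
Because $|{\bf t}'-{\bf t}''|<\delta\leq\delta_{1}$ and the common second argument is $x'\in B$, the first summand is $<\epsilon/2$ by \eqref{sqwa}; because $\|x'-x''\|<\delta\leq\epsilon/(2L_{B})$, the second summand is at most $L_{B}\|x'-x''\|<\epsilon/2$ by \eqref{lips-lips} (here ${\bf t}''\in I$ is held fixed). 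Adding the two bounds yields $\|F({\bf t}';x')-F({\bf t}'';x'')\|_{Y}<\epsilon$, which is precisely the joint uniform continuity on $I\times B$ demanded in Definition \ref{ris}(i).

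Finally, I would note that the uniformly recurrent case requires no new argument: its ``of type $1$'' formulation carries the very same continuity clause \eqref{sqwa}, and the $\mathbb{D}$-quasi-asymptotic Bohr $(\mathcal{B},I',c)$-uniform recurrence part is again shared between the two definitions, so the displayed computation above applies unchanged. I do not expect any genuine obstacle here; the only point meriting care is the order of the split---one must vary the space variable with the time variable \emph{frozen}, so that \eqref{lips-lips} can be invoked at a single ${\bf t}''$, and vary the time variable with a \emph{common} second argument $x'$, so that \eqref{sqwa} (which does not allow two different second arguments) is legitimately applicable.
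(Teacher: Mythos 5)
Your proof is correct and follows essentially the same route as the paper: both arguments note that the quasi-asymptotic part of the definition is shared and then upgrade \eqref{sqwa} to joint uniform continuity via a triangle-inequality split using \eqref{lips-lips}, the only (immaterial) difference being that the paper passes through the intermediate point $({\bf t}';x'')$ while you pass through $({\bf t}'';x')$. Your explicit choice of $\delta$ just spells out details the paper leaves implicit.
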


\begin{proof}
Let the set $B\in {\mathcal B}$ be given and let $L_{B}>0$ satisfy \eqref{lips-lips}.
The proof is a simple consequence of the corresponding definitions and the following decomposition (${\bf t},\ {\bf t}\in I;$ $x',\ x''\in B$):
\begin{align*}
\Bigl\| F\bigl({\bf t'} ; x'\bigr)-F\bigl({\bf t''} ; x''\bigr)\Bigr\|_{Y}&\leq \Bigl\| F\bigl({\bf t'} ; x'\bigr)-F\bigl({\bf t'} ; x''\bigr)\Bigr\|_{Y}+\Bigl\| F\bigl({\bf t'} ; x''\bigr)-F\bigl({\bf t''} ; x''\bigr)\Bigr\|_{Y}
\\& \leq L_{B}\bigl\| x'-x''\bigr\|+\Bigl\| F\bigl({\bf t'} ; x''\bigr)-F\bigl({\bf t''} ; x''\bigr)\Bigr\|_{Y}.
\end{align*}
\end{proof}

Further on, we want to notice that we do not require any type of boundedness of function $F(\cdot)$ in
Definition \ref{nakaza} and Definition \ref{ris}; for example, an application of the Lagrange mean value theorem yields that for each fixed real number $\sigma \in (0,1)$ we have 
$
| (t+\tau)^{\sigma}-t^{\sigma} | \leq \tau \sigma t^{\sigma-1},$ $t>0,$ $\sigma\geq 0,
$
so that the function $t\mapsto t^{\sigma},$ $t\geq 0$ is remotely almost periodic in the sense of Definition \ref{ris}, as we have already discussed for slowly oscillating functions. 
In connection with the unboundedness of function $F(\cdot)$ in these definitions, we would like to present the following illustrative example:

\begin{example}\label{merak}
Let us recall that
A. Haraux and P. Souplet have proved, in \cite[Theorem 1.1]{haraux}, that the function $f: {\mathbb R}\rightarrow {\mathbb R},$ given by
\begin{align}\label{gader}
f(t):=\sum_{n=1}^{\infty}\frac{1}{n}\sin^{2}\Bigl(\frac{t}{2^{n}} \Bigr)\, dt,\quad t\in {\mathbb R},
\end{align}
is uniformly continuous, uniformly recurrent and unbounded; futhermore, this function is even, Weyl $p$-almost automorphic for any finite exponent $p\geq 1$ and satisfies that for each number $\tau\in {\mathbb R}$ the function $f(\cdot +\tau)-f(\cdot)$ belongs to the space $ANP({\mathbb R}: {\mathbb C})$
consisting of all almost periodic complex-valued functions whose Bohr's spectrum is contained in the set ${\mathbb R}\setminus \{0\};$ see \cite{densities} for the notion and more details. 

We will prove here that the function $f(\cdot)$ is not quasi-asymptotically almost periodic in the sense of Definition \ref{nakaza} and \cite[Definition 3.3]{BIMV}. If we assume the contrary, then for each positive real number $\epsilon>0$ there exists a finite real number $l>0$ such that
any subinterval $I' \subseteq {\mathbb R}$ of length $l$ contains a number $\tau \in I'$ such that there exists a finite real number $M(\epsilon,\tau)>0$ so that $|f(t +\tau)-f(t)|\leq \epsilon$ for $|t|\geq M(\epsilon,\tau).$ Fix such a number $\tau;$ then the function $t\mapsto f(t +\tau)-f(t),$ $t\in {\mathbb R}$ is
almost periodic so that an application of the supremum formula for almost periodic functions \cite[Theorem 2.1.1(xi)]{nova-mono} yields that
$$
\sup_{t\in {\mathbb R}}|f(t+\tau)-f(t)|=\sup_{t\geq M(\epsilon,\tau)}|f(t +\tau)-f(t)|\leq \epsilon.
$$
This implies that the number $\tau$ is the usual $\epsilon$-period of function $f(\cdot)$ so that the function $f(\cdot)$ is almost periodic by definition, which is a contradiction.
\end{example}

If we denote by $Q-AAP_{buc}({\mathbb R}^{n} : Y)$ the space consisting of all bounded, uniformly continuous quasi-asymptotically almost periodic functions $F : {\mathbb R}^{n} \rightarrow Y,$ then we know from the foregoing that $Q-AAP_{buc}({\mathbb R}^{n} : Y)$ coincides with the space of all
uniformly continuous (usually, we assume this as a blank hypothesis)
remotely almost periodic functions ${\mathcal R}{\mathcal A}{\mathcal P}({\mathbb R}^{n} : Y).$ We know therefore that
$Q-AAP_{buc}({\mathbb R}^{n} : {\mathbb C})$
is exactly 
the closed subalgebra of  
$C_{b}({\mathbb R}^{n}: {\mathbb C})$
generated by the space of all almost periodic functions $F : {\mathbb R}^{n} \rightarrow {\mathbb C}$ and the space of all slowly oscillating functions $F : {\mathbb R}^{n} \rightarrow {\mathbb C};$
this means that, for every $\epsilon>0$ and for every $F\in Q-AAP_{buc}({\mathbb R}^{n} : {\mathbb C})$, we can always find two almost periodic functions $G_{i} : {\mathbb R}^{n} \rightarrow {\mathbb C}$ ($i=1,2$) and two slowly oscillating functions  
$Q_{i} : {\mathbb R}^{n} \rightarrow {\mathbb C}$ ($i=1,2$) such that $\| F -[G_{1}+Q_{1}+G_{2}Q_{2}]\|_{\infty}<\epsilon$ (\cite{sarason, f-yang}).
The proof of this important result is based on the use of certain results from the theory of $C^{\ast}$-algebras concerning the Gelfand spaces of multiplicative linear functionals of ${\mathcal R}{\mathcal A}{\mathcal P}({\mathbb R}^{n} : {\mathbb C});$
it could be very enticing to 
extend this result for the functions defined on the general regions $I\subseteq {\mathbb R}^{n}.$

The results obtained in \cite[Proposition 2.1, Proposition 2.2]{c-zhang1} provide 
new characterizations of bounded, uniformly continuous quasi-asymptotically almost periodic functions $F : {\mathbb R}^{n} \rightarrow Y,$ while
\cite[Proposition 2.3]{c-zhang1} and \cite[Proposition 2.3]{s-zhang} provide 
new characterizations of bounded, uniformly continuous quasi-asymptotically almost periodic functions $F : {\mathbb R} \rightarrow Y.$ 
On the other hand, the results obtained in \cite[Theorem 3.1, Theorem 3.2, Proposition 3.4]{BIMV}, the composition principles obtained in \cite[Theorem 3.3, Theorem 3.4]{BIMV} and the result obtained in \cite[Proposition 2.15]{brazil}
provide 
new characterizations of remotely ($c$-)almost periodic functions $F : I \rightarrow Y,$ $I\subseteq {\mathbb R}$ (it is worth noting that \cite[Proposition 3.4(ii)]{BIMV} can be used to substantially shorten the proof of \cite[Lemma 3.6]{s-zhang}),
while the results obtained in \cite[Proposition 3.2, Proposition 3.5, Theorem 3.6]{multi-arx} provide 
new characterizations of remotely ($c$-)almost periodic functions $F : I \rightarrow Y,$ $I\subseteq {\mathbb R}^{n}$ (and certain two-parameter analogues). For example, using \cite[Theorem 3.1(ii)]{BIMV} with $c=1$ and our analysis contained in the final paragraph of \cite[Section 3]{multi-arx}, we immediately get
that
$$
AA\bigl({\mathbb R}^{n} :Y \bigr) \cap  {\mathcal R}{\mathcal A}{\mathcal P}\bigl({\mathbb R}^{n} : Y\bigr)=AP\bigl( {\mathbb R}^{n} :Y \bigr),
$$
where $AP({\mathbb R}^{n}:Y)$ and $AA({\mathbb R}^{n}:Y)$ denote the space of all almost periodic functions from ${\mathbb R}^{n}$ into $Y$ and the space of all almost automorphic functions from ${\mathbb R}^{n}$ into $Y$, respectively; see \cite{marko-manuel-aa} for the notion.

From application point of view, it is incredibly important to emphasize that  \cite[Proposition 3.4]{BIMV} can be used to profile some statements 
concerning the invariance of remote $c$-almost periodicity under the
actions of convolution products, since the uniform continuity is preserved under the actions of convolution products in the equations \cite[(3.1); (3.2)]{BIMV}; these results seem to new and not considered elsewhere even for the usual remote almost periodicity ($c=1$). This enables one to provide numerous important applications in the study of time-remotely almost periodic solutions 
for various classes of the abstract (degenerate) Volterra integro-differential equations (see also \cite[Section 4]{brazil}, where we have analyzed quasi-asymptotically almost periodic solutions of the abstract nonautonomous differential equations of first order; the question whether the obtained solutions are uniformly continuous is not so simple to be answered and requires further analyses).
We will provide only one illustrative application of this type:

\begin{example}\label{mp}
Let $\Omega$ be a bounded domain in ${\mathbb R}^{n},$ $b>0,$ $m(x)\geq 0$ a.e. $x\in \Omega$, $m\in L^{\infty}(\Omega),$ $1<p<\infty$ and $X:=L^{p}(\Omega).$
Suppose that the operator $A:=\Delta -b $ acts on $X$ with the Dirichlet boundary conditions, and
$B$ is the multiplication operator by the function $m(x).$
Then the multivalued linear operator
${\mathcal A}:=AB^{-1}$ satisfies condition \cite[(P)]{nova-mono} with $\beta=1/p$ and some finite constants $c,\ M>0.$ Therefore, we are in a position to analyze the existence and uniqueness of remotely almost periodic solutions 
of the following Poisson heat equation on the real line
\[\left\{
\begin{array}{l}
\frac{\partial}{\partial t}[m(x)v(t,x)]=(\Delta -b )v(t,x) +f(t,x),\quad t\in {\mathbb R},\ x\in {\Omega},\\
v(t,x)=0,\quad (t,x)\in [0,\infty) \times \partial \Omega ,\\
\end{array}
\right.
\]
and the following Poisson heat equation on the non-negative real line
\[\left\{
\begin{array}{l}
\frac{\partial}{\partial t}[m(x)v(t,x)]=(\Delta -b )v(t,x) +f(t,x),\quad t\geq 0,\ x\in {\Omega};\\
v(t,x)=0,\quad (t,x)\in [0,\infty) \times \partial \Omega ,\\
 m(x)v(0,x)=u_{0}(x),\quad x\in {\Omega}
\end{array}
\right.
\]
in the space  $X,$ by using the substitution $u(t,x)=m(x)v(t,x)$ and passing to the corresponding linear differential inclusions of first order; see \cite{nova-mono} for more details. We can also analyze the existence and uniqueness of remotely almost periodic solutions for certain classes of fractional Poisson type equations and the semilinear analogues for all above-mentioned classes of abstract degenerate equations (see \cite[Theorem 3.3, Theorem 3.4]{BIMV}).
\end{example}

For the sequel, we need to recall the following result from \cite{multi-arx}:

\begin{lem}\label{gasdas}
\begin{itemize}
\item[(i)] Let ${\bf \omega}\in I \setminus \{0\},$ $c\in {\mathbb C} \setminus \{0\},$ $|c|\leq 1,$
${\bf \omega}+I \subseteq I$ and ${\mathbb D} \subseteq I \subseteq {\mathbb R}^{n}.$ Set $I':=\omega \cdot {\mathbb N}.$
If a continuous
function $F:I \times X\rightarrow Y$ is $(S,{\mathbb D},{\mathcal B})$-asymptotically 
$(\omega,c)$-periodic, then the function $F(\cdot; \cdot)$ is ${\mathbb D}$-quasi-asymptotically $({\mathcal B},I',c)$-almost periodic.
\item[(ii)] Let ${\bf \omega}_{j}\in {\mathbb R} \setminus \{0\},$ $c_{j}\in {\mathbb C} \setminus \{0\},$
${\bf \omega}_{j}e_{j}+I \subseteq I,$ 
${\mathbb D}_{j} \subseteq I \subseteq {\mathbb R}^{n},$ the set ${\mathbb D}_{j}$ be unbounded ($1\leq j\leq n$) and the set ${\mathbb D}$ consisting of all tuples ${\bf t}\in {\mathbb D}_{n}$ such that ${\bf t}+\sum_{i=j+1}^{n}\omega_{i}e_{i}\in {\mathbb D}_{j}$ for all $j\in {\mathbb N}_{n-1}$
be unbounded in ${\mathbb R}^{n}$. Set $\omega:=\sum_{j=1}^{n}\omega_{j}e_{j},$ 
$I':=\omega \cdot {\mathbb N}$
and $c:=\prod_{j=1}^{n}c_{j}.$
If $F:I \times X \rightarrow Y$ is $(S,{\mathcal B})$-asymptotically $({\bf \omega}_{j},c_{j},{\mathbb D}_{j})_{j\in {\mathbb N}_{n}}$-periodic, $|c|\leq 1$ and $\omega \in I,$ then  
the function $F(\cdot;\cdot)$ is ${\mathbb D}$-quasi-asymptotically $({\mathcal B},I',c)$-almost periodic.
\end{itemize}
\end{lem}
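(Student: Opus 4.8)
The plan is to prove (i) first and then deduce (ii) from it by a coordinate-wise telescoping. For (i), I would begin by recording that the hypotheses make the conclusion well-posed: since $\omega\in I$ and $\omega+I\subseteq I$, induction gives $k\omega+I\subseteq I$ for all $k\in\mathbb{N}$, so $I'=\omega\cdot\mathbb{N}$ is an unbounded subset of $I$ with $I+I'\subseteq I$, as required in Definition \ref{nakaza}. By the reformulation \eqref{emojmarko145m-rem}, it then suffices to fix $B\in\mathcal{B}$ and $\epsilon>0$ and produce $l>0$ so that every ball $B(\mathbf{t}_{0},l)$ with $\mathbf{t}_{0}\in I'$ contains some $\tau\in I'$ for which $\limsup_{|\mathbf{t}|\rightarrow+\infty,\mathbf{t}\in\mathbb{D}}\sup_{x\in B}\|F(\mathbf{t}+\tau;x)-cF(\mathbf{t};x)\|_{Y}\leq\epsilon$; that is, I must exhibit a relatively dense set of admissible shifts inside $I'$.

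The second step is a telescoping estimate. Writing $G_{j}:=F(\mathbf{t}+j\omega;x)$, the identity $G_{k}-c^{k}G_{0}=\sum_{j=1}^{k}c^{k-j}(G_{j}-cG_{j-1})$ expresses the $k$-step difference as a finite combination of single-step differences $F(\mathbf{s}+\omega;x)-cF(\mathbf{s};x)$ evaluated at $\mathbf{s}=\mathbf{t}+(j-1)\omega$. Each single-step difference tends to $0$ as $|\mathbf{s}|\rightarrow+\infty$ within $\mathbb{D}$ uniformly for $x\in B$, by the assumed $(S,\mathbb{D},\mathcal{B})$-asymptotic $(\omega,c)$-periodicity, and along the orbit $|\mathbf{t}+(j-1)\omega|\rightarrow+\infty$ together with $|\mathbf{t}|$; hence for every fixed $k\in\mathbb{N}$ one obtains $\limsup_{|\mathbf{t}|\rightarrow+\infty,\mathbf{t}\in\mathbb{D}}\sup_{x\in B}\|F(\mathbf{t}+k\omega;x)-c^{k}F(\mathbf{t};x)\|_{Y}=0$.

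The decisive step is to pass from the factor $c^{k}$ to the prescribed factor $c$. For a candidate shift $\tau=k\omega$ I would split $F(\mathbf{t}+k\omega;x)-cF(\mathbf{t};x)=[F(\mathbf{t}+k\omega;x)-c^{k}F(\mathbf{t};x)]+(c^{k}-c)F(\mathbf{t};x)$, so that the $\limsup$ of the left-hand side is at most $|c^{k}-c|\cdot\limsup_{|\mathbf{t}|\rightarrow+\infty,\mathbf{t}\in\mathbb{D}}\sup_{x\in B}\|F(\mathbf{t};x)\|_{Y}$. Here the hypothesis $|c|\leq1$ enters crucially: when $|c|=1$ the sequence $(c^{k})_{k\in\mathbb{N}}$ is (Bohr) almost periodic, so for any $\delta>0$ the set $\{k\in\mathbb{N}:|c^{k}-c|\leq\delta\}=\{k:|c^{k-1}-1|\leq\delta\}$ is relatively dense in $\mathbb{N}$, whence the corresponding shifts $k\omega$ are relatively dense in $I'$; when $|c|<1$ one checks that $F$ decays along the $\omega$-orbits, so together with the remote boundedness of $\|F\|_{Y}$ the product term becomes harmless and every $\tau\in I'$ is admissible. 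Choosing $\delta$ so that $|c^{k}-c|$ times the (finite) remote bound on $\|F\|_{Y}$ is below $\epsilon$ then yields the relatively dense family of admissible shifts, completing (i). I expect this to be the main obstacle: the term $(c^{k}-c)F(\mathbf{t};x)$ is controlled only once one knows that $\|F(\mathbf{t};x)\|_{Y}$ stays bounded as $|\mathbf{t}|\rightarrow+\infty$ in $\mathbb{D}$ and that the ``good'' multiples are relatively dense, and both of these rest on $|c|\leq1$.

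For (ii), the plan is to show that the multi-directional hypothesis collapses, along the specially chosen set $\mathbb{D}$, to the single-directional hypothesis of (i) for $\omega=\sum_{j=1}^{n}\omega_{j}e_{j}$ and $c=\prod_{j=1}^{n}c_{j}$, after which (i) applies verbatim (note $\omega\in I$ and $|c|\leq1$ are assumed). Setting $\sigma_{j}:=\sum_{i=j}^{n}\omega_{i}e_{i}$, I would use the coordinate telescoping $F(\mathbf{t}+\omega;x)-cF(\mathbf{t};x)=\sum_{j=1}^{n}\bigl(\prod_{i<j}c_{i}\bigr)[F(\mathbf{t}+\sigma_{j};x)-c_{j}F(\mathbf{t}+\sigma_{j+1};x)]$, in which the $j$-th bracket equals $F(\mathbf{u}+\omega_{j}e_{j};x)-c_{j}F(\mathbf{u};x)$ with $\mathbf{u}=\mathbf{t}+\sigma_{j+1}=\mathbf{t}+\sum_{i=j+1}^{n}\omega_{i}e_{i}$. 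The definition of $\mathbb{D}$ is engineered precisely so that $\mathbf{t}\in\mathbb{D}$ forces $\mathbf{u}\in\mathbb{D}_{j}$ for every $j$ (the condition $\mathbf{t}\in\mathbb{D}_{n}$ covering $j=n$), so each bracket tends to $0$ as $|\mathbf{t}|\rightarrow+\infty$ in $\mathbb{D}$ uniformly for $x\in B$ by the $(S,\mathcal{B})$-asymptotic $(\omega_{j},c_{j},\mathbb{D}_{j})_{j}$-periodicity; since the partial products $\prod_{i<j}c_{i}$ are fixed constants, the finite sum tends to $0$ as well. This establishes the $(S,\mathbb{D},\mathcal{B})$-asymptotic $(\omega,c)$-periodicity of $F$, and the conclusion follows from (i). The only delicate point here is the bookkeeping that places each intermediate point $\mathbf{u}$ in the correct $\mathbb{D}_{j}$, which is exactly what the hypothesis on $\mathbb{D}$ provides.
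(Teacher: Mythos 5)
The paper itself gives no proof of Lemma \ref{gasdas}: it is recalled verbatim from \cite{multi-arx}, so there is nothing in this text to compare your argument against line by line. Your overall architecture is surely the intended one. In particular, the coordinate telescoping in (ii), together with the observation that the set ${\mathbb D}$ there is engineered precisely so that each point ${\bf t}+\sum_{i=j+1}^{n}\omega_{i}e_{i}$ lands in ${\mathbb D}_{j}$ (the case $j=n$ being covered by ${\bf t}\in {\mathbb D}_{n}$), correctly reduces (ii) to (i); and the identity $G_{k}-c^{k}G_{0}=\sum_{j=1}^{k}c^{k-j}(G_{j}-cG_{j-1})$ is the right starting point for (i).

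Two steps in (i) do not close, however. First, the $k$-fold telescoping applies the one-step hypothesis at the points ${\bf t}+(j-1)\omega$, $1\le j\le k$, but that hypothesis is a limit taken over ${\bf t}\in {\mathbb D}$ only; nothing in the statement guarantees $\omega+{\mathbb D}\subseteq {\mathbb D}$, so for $j\ge 2$ these intermediate points may fall outside ${\mathbb D}$ and the one-step estimate is unavailable. This is harmless when ${\mathbb D}=I$ (since $\omega+I\subseteq I$), but not in general, and in particular not for the set ${\mathbb D}$ produced in (ii). Second, and more seriously, your control of the term $(c^{k}-c)F({\bf t};x)$ invokes a finite remote bound on $\|F(\cdot;\cdot)\|_{Y}$ that is nowhere among the hypotheses --- the paper explicitly stresses that no boundedness of $F$ is assumed in Definition \ref{nakaza} --- and the decay claim for $|c|<1$ likewise presupposes an a priori bound to start the iteration from. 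This is not merely presentational: for $|c|=1$ with $c$ not a root of unity the statement as printed can actually fail for unbounded $F$. Take $n=1$, $I={\mathbb D}=[0,\infty)$, $\omega=1$, $c=e^{i\theta}$ with $\theta/2\pi$ irrational, and $F(t)=e^{i\theta t}\sqrt{t}$; then $F(t+1)-cF(t)=e^{i\theta (t+1)}(\sqrt{t+1}-\sqrt{t})\to 0$, so $F$ is $(S)$-asymptotically $(1,c)$-periodic, while for every $k\ge 2$ one has $|F(t+k)-cF(t)|^{2}=(\sqrt{t+k}-\sqrt{t})^{2}+2\sqrt{t(t+k)}\bigl(1-\cos (\theta(k-1))\bigr)\to +\infty$, so $\tau=1$ is the only admissible shift and $I'={\mathbb N}$ contains no relatively dense set of admissible shifts. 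Thus either a remote boundedness hypothesis must be added (after which your argument, with the first gap repaired, goes through as you describe), or one must restrict to $c=1$, where $c^{k}-c=0$ and the problematic term disappears --- which is in fact the only case this paper uses downstream, in Proposition \ref{gasdas1}.
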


Let us observe that, if a continuous function $F : I \times X \rightarrow Y$ is
${\mathbb D}$-quasi-asymptotically $({\mathcal B},I'_{i},c)$-almost periodic for $i=1,2,$ then the function $F(\cdot;\cdot)$ is ${\mathbb D}$-quasi-asymptotically $({\mathcal B},I'_{1}\cup I_{2}',c)$-almost periodic (a similar statement holds for ${\mathbb D}$-quasi-asymptotical $({\mathcal B},I',c)$-uniform recurrence). Keeping this in mind, the subsequent result follows immediately from Lemma \ref{gasdas}:

\begin{prop}\label{gasdas1}
\begin{itemize}
\item[(i)] Let 
${\mathbb D} \subseteq I \subseteq {\mathbb R}^{n}$ and the set ${\mathbb D}$ be unbounded.
If a continuous
function $F:I \times X\rightarrow Y$ is $({\mathbb D},{\mathcal B})$-slowly oscillating, then the function $F(\cdot; \cdot)$ is ${\mathbb D}$-quasi-asymptotically $({\mathcal B},I')$-almost periodic with
$$
I':=\bigl\{\omega \cdot {\mathbb N} \, ; \, \omega \in A_{I} \bigr\}.
$$
\item[(ii)] Let 
${\mathbb D}_{j} \subseteq I \subseteq {\mathbb R}^{n},$ the set ${\mathbb D}_{j}$ be unbounded ($1\leq j\leq n$) and for each tuple ${\bf \omega}= (\omega_{1},...,\omega_{n})\in B_{I}$ the set ${\mathbb D}_{{\bf \omega}}$ consisting of all tuples ${\bf t}\in {\mathbb D}_{n}$ such that ${\bf t}+\sum_{i=j+1}^{n}\omega_{i}e_{i}\in {\mathbb D}_{j}$ for all $j\in {\mathbb N}_{n-1}$
be unbounded in ${\mathbb R}^{n}$. Suppose that the set ${\mathcal D}\equiv \bigcap_{\omega \in B_{I}}{\mathbb D}_{{\bf \omega}}$  is unbounded,
$$
I':=\bigl\{\omega \cdot {\mathbb N} \, ; \, \omega \in B_{I} \cap I \bigr\},
$$
and $c:=\prod_{j=1}^{n}c_{j}.$
If $F:I \times X \rightarrow Y$ is $({\mathcal B},({\mathbb D}_{j})_{j\in {\mathbb N}_{n}})$-slowly oscillating, then  
the function $F(\cdot;\cdot)$ is ${\mathbb D}$-quasi-asymptotically $({\mathcal B},I')$-almost periodic.
\end{itemize}
\end{prop}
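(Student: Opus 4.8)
The plan is to reduce both assertions to Lemma~\ref{gasdas} by means of the reformulations of slow oscillation recorded immediately after Definition~\ref{drasko-norm} and Definition~\ref{drasko-arx}. Namely, $F$ is $(\mathbb{D},\mathcal{B})$-slowly oscillating precisely when it is $(S,\mathbb{D},\mathcal{B})$-asymptotically $(\omega,1)$-periodic for every $\omega\in A_{I}$, and $F$ is $(\mathcal{B},(\mathbb{D}_{j})_{j\in\mathbb{N}_{n}})$-slowly oscillating precisely when it is $(S,\mathcal{B})$-asymptotically $(\omega_{j},1,\mathbb{D}_{j})_{j\in\mathbb{N}_{n}}$-periodic for every tuple $(\omega_{1},\dots,\omega_{n})\in B_{I}$. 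Each fixed translation direction thus supplies a hypothesis of exactly the form demanded by Lemma~\ref{gasdas}, with contraction constant $1$ (so that the requirement $|c|\le 1$ is automatic and $c=\prod_{j}c_{j}=1$ in the second case). What remains is to run Lemma~\ref{gasdas} in each admissible direction and then to amalgamate the resulting one-direction index sets into the single set $I'$.

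For (i) I would in fact argue directly, which simultaneously explains why the index set may range over all of $A_{I}$ and not merely over $A_{I}\cap I$ (as the hypothesis $\omega\in I$ of Lemma~\ref{gasdas}(i) would otherwise suggest). Fix $\mathbf{t}_{0}\in I'=\bigcup_{\omega\in A_{I}}\omega\cdot\mathbb{N}$ and write $\mathbf{t}_{0}=k\omega$ with $\omega\in A_{I}$ and $k\in\mathbb{N}$; since $kA_{I}\subseteq A_{I}$, we have $\mathbf{t}_{0}\in A_{I}$. Applying the defining limit of Definition~\ref{drasko-norm} to the single vector $\mathbf{t}_{0}$ then gives, for every $B\in\mathcal{B}$,
\begin{align*}
\lim_{|\mathbf{t}|\to+\infty,\,\mathbf{t}\in\mathbb{D}}\ \sup_{x\in B}\bigl\|F(\mathbf{t}+\mathbf{t}_{0};x)-F(\mathbf{t};x)\bigr\|_{Y}=0 .
\end{align*}
Choosing $\tau=\mathbf{t}_{0}$ and any fixed radius (say $l=1$) now verifies Definition~\ref{nakaza}(i), in the concise form \eqref{emojmarko145m-rem}, for every $\mathbf{t}_{0}\in I'$ at once; one checks en passant that $I+I'\subseteq I$, as needed, from $\omega+I\subseteq I$ by induction on $k$. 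This is precisely Lemma~\ref{gasdas}(i) specialised to $c=1$, combined with the observation preceding the proposition.

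For (ii) the direct shortcut is unavailable, because $(\mathcal{B},(\mathbb{D}_{j})_{j\in\mathbb{N}_{n}})$-slow oscillation only controls the coordinate translations $\omega_{j}e_{j}$ along the individual sets $\mathbb{D}_{j}$, whereas the index set involves the diagonal vector $\omega=\sum_{j}\omega_{j}e_{j}$. Here I would genuinely invoke Lemma~\ref{gasdas}(ii): for each tuple $\omega\in B_{I}\cap I$ (so that $\sum_{j}\omega_{j}e_{j}\in I$, as that lemma requires), applying it with all $c_{j}=1$ yields that $F$ is $\mathbb{D}_{\omega}$-quasi-asymptotically $(\mathcal{B},\omega\cdot\mathbb{N},1)$-almost periodic on the set $\mathbb{D}_{\omega}$ constructed in the statement. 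Because $\mathcal{D}=\bigcap_{\omega\in B_{I}}\mathbb{D}_{\omega}\subseteq\mathbb{D}_{\omega}$ is unbounded, restricting the outer supremum in \eqref{emojmarko145m-rem} from $\mathbb{D}_{\omega}$ to the smaller set $\mathcal{D}$ can only decrease it, so the property survives on $\mathcal{D}$; one then amalgamates over all tuples exactly as in (i), using $kB_{I}\subseteq B_{I}$.

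The main obstacle in both parts is the amalgamation step. Lemma~\ref{gasdas} and the quoted observation deliver quasi-asymptotic almost periodicity for one index set $\omega\cdot\mathbb{N}$ at a time, and the observation is literally phrased for the union of only two sets; the genuine point is therefore to guarantee that a single radius $l$ in Definition~\ref{nakaza} can be chosen uniformly over the whole (possibly infinite) family of directions. The scaling closure $kA_{I}\subseteq A_{I}$ (respectively $kB_{I}\subseteq B_{I}$), which forces every $\mathbf{t}_{0}\in I'$ to itself be an admissible direction, is exactly what permits the uniform choice $\tau=\mathbf{t}_{0}$, $l=1$, and so dissolves this difficulty.
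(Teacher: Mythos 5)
Your proposal is correct and follows essentially the same route as the paper, which disposes of the proposition in one line by combining Lemma~\ref{gasdas} (with $c=1$, resp.\ all $c_{j}=1$) with the preceding observation on unions of index sets $I'$. The extra care you take over the amalgamation step --- noting that the union observation is stated for two index sets only, and that the scaling closure $kA_{I}\subseteq A_{I}$ (resp.\ $kB_{I}\subseteq B_{I}$) lets one take $\tau={\bf t}_{0}$ with a radius $l$ uniform over the whole family of directions --- is a detail the paper leaves implicit, not a different argument.
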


It is clear that every slowly oscillating function $F : I \rightarrow Y$, where $I$ is $[0,\infty)^{n}$ or ${\mathbb R}^{n},$ is quasi-asymptotically almost periodic, which immediately follows from Proposition \ref{gasdas1}.

We will not consider here
the differentiation and integration of multi-dimensional remotely $c$-almost periodic functions (see \cite[Subsection 2.4]{marko-manuel-ap} for the related results concerning multi-dimensional $({\mathrm R},{\mathcal B})$-almost periodic type functions,
and 
\cite[Proposition 2.3]{s-zhang} for a result concerning the first anti-derivatives of one-dimensional remotely almost periodic functions).
Concerning the existence of mean value,
the 
boundedness of a remotely $c$-almost periodic function $F(\cdot)$ is almost inevitable to be assumed in order to ensure the existence of finite mean value of $F(\cdot)$. We feel it is our duty to emphasize that the 
proof of \cite[Proposition 2.4]{s-zhang}, a statement which considers the existence and properties of mean value of one-dimensional remotely almost periodic functions defined on the whole real line, is not correct and contains several important mistakes: 
\begin{itemize}
\item[1.] The estimate directly after the equation \cite[(2.12)]{s-zhang} is not correct since the term ``$2G(l+s_{0})$'' has to be written here as ``$2G(2l+s_{0}+a)$'', which causes several serious and unpleasant consequences for the remainder of the proof.  
\item[2.] It is not clear the meaning of the number $T_{0}$ in the equations \cite[(2.13)-(2.14)]{s-zhang}.
\item[3.] The existence of mean value, stated in the equation \cite[(2.15)]{s-zhang}, is given without any reasonable explanation; see also the proof of \cite[Theorem 1.3.1, pp. 32-34]{188}, where the correct proof of the existence of mean value is given for the usually considered class of almost periodic functions (besides these observations, we would like to note that the uniform continuity of function $f(\cdot)$
has not been used in the proof of the above-mentioned proposition). 
\end{itemize}
Keeping in mind these observations, it follows that the problem of existence or non-existence of mean value of remotely almost periodic functions is still unsolved; we want also to
emphasize that our structural results established in the first two sections of \cite{maulen} and the part of the third section of \cite{maulen} before subsection 3.1 of this paper, where it has directly been assumed that a remotely almost periodic function has a mean value, remain completely true by assuming additionally that 
any considered remotely almost periodic function has a mean value.

Now we will prove the existence of a bounded, uniformly continuous slowly oscillating function $f : [0,\infty) \rightarrow c_{0}$ which does  not have mean value, which clearly marks that
the calculations given in \cite[Proposition 2.4]{s-zhang} are not true:

\begin{example}\label{vojko}
Define $f : [0,\infty) \rightarrow c_{0}$ by $f(t):=(e^{-t/n})_{n\in {\mathbb N}},$ $t\geq 0.$ In \cite[Example 2.2]{brindle}, D. Brindle has proved that the function $f(\cdot)$ is bounded, uniformly continuous and slowly oscillating (albeit we have found some minor typographical errors in this example, the obtained conclusions are correct; we can use the inequality $1-e^{-x}\leq x,$ $x\geq 0$ here). If we assume that the limit 
$$
\lim_{t\rightarrow +\infty}\frac{1}{t}\int^{t}_{0}f(s)\, ds
$$ 
exists in $c_{0},$
then it can be simply approved that this limit has to be equal to the  zero sequence, so that we would have
\begin{align}\label{21}
\lim_{t\rightarrow +\infty}\sup_{n\in {\mathbb N}}\Biggl[ \frac{n}{t}\Bigl(1-e^{-(t/n)} \Bigr)\Biggr]=0.
\end{align}
If we assume that $t\geq 1$ and $t\in [n,n+1)$ for some integer $n\in {\mathbb N}$, then we have 
\begin{align*}
\frac{n}{t}\Bigl(1-e^{-(t/n)} \Bigr) \geq \frac{n}{n+1}\Bigl(1-e^{-(n/n)} \Bigr)\geq \frac{1}{2}\bigl(1-e^{-1} \bigr)
\end{align*}
and therefore 
\begin{align*}
\sup_{n\in {\mathbb N}}\Biggl[ \frac{n}{t}\Bigl(1-e^{-(t/n)} \Bigr)\Biggr]\geq \frac{1}{2}\bigl(1-e^{-1} \bigr),\quad t\geq 1,
\end{align*}
which clearly contradicts \eqref{21}.
\end{example}

The interested reader may try to construct an example of a bounded, uniformly continuous slowly oscillating function $f : [0,\infty) \rightarrow {\mathbb C}$ without mean value (it is our strong belief that such a function really exists; see also \cite[Section 2.2]{brindle}).

At the end of this section, we would like to point out that the proofs of \cite[Theorem 2.36]{marko-manuel-ap} and \cite[Theorem 2.28]{multi-ce} concerning the extensions of multi-dimensional ($c$-)almost periodic type functions do not work for quasi-asymptotically $(I',c)$-almost periodic functions and remotely $(I',c)$-almost periodic functions. Without going into full details, let us only note that the situation is much simpler for slowly oscillating functions, when we can construct many different extensions of a slowly oscillating function $F : I \rightarrow Y$ to the whole Euclidean space ${\mathbb R}^{n};$ for example, if a slowly oscillating function $f : [0,\infty) \rightarrow Y$ is given in advance, we can extend it linearly to the interval $[-r,0],$ where $r>0$ is an arbitrary real number, and after that we can extend the obtained function by zero outside the interval $[-r,\infty).$

\section{Applications to the integro-differential equations}\label{apply}

This section is devoted to some applications of our results to the abstract Volterra integro-differential equations and the ordinary differential equations.
 \vspace{0.1cm}

1. In \cite[Theorem 3.4]{c-zhang1}, C. Zhang and L. Jiang have analyzed remotely almost periodic solutions of the perturbed heat equation
\begin{align}
\notag u_{t}&=\sum_{i=1}^{m+n}\Bigl[u_{x_{i}x_{i}}+b_{i}(x,t)u_{x_{i}} \Bigr]-c(x,t)u=f(x,t),\quad (x,t) \in {\mathbb R}^{n+m}_{T};
\\\label{role} & u(x,0)=\varphi(x),\quad x\in {\mathbb R}^{n+m},
\end{align}
following the method proposed by A. Friedman \cite{friedman}; see also the boundary value problem considered in \cite[Lemma 3.3]{f-yang}, which can be also reconsidered in our context. Since \cite[Lemma 3.1]{c-zhang1} (see also the proof of \cite[Proposition 2.41]{c-zhang1}) and \cite[Corollary 3.2, Lemma 3.3]{c-zhang1} can be reformulated for multi-dimensional remotely $c$-almost periodic functions, the argumentation contained in the proof of \cite[Theorem 3.4]{c-zhang1} shows that the following holds (we define the spaces ${\mathcal RAP}_{c}({\mathbb R}^{n} \times \overline{{\mathbb R}^{m}_{T}})$ and $ {\mathcal RAP}_{c}({\mathbb R}^{n+m})$ similarly as in \cite{c-zhang1}, with the use of difference $\cdot -c\cdot$ in place of difference $\cdot-\cdot$): 

\begin{thm}\label{kopkop}
If the functions 
$f(x,t),$ $b_{i}(x,t),\ \partial b_{i}/\partial x_{j}(x,t)$ ($j=1,....,n+m$) and $c(x,t)$ belong to the space ${\mathcal RAP}_{c}({\mathbb R}^{n} \times \overline{{\mathbb R}^{m}_{T}})$ and the functions $\varphi,\ \partial \varphi/\partial x_{j}$ belong to the space $ {\mathcal RAP}_{c}({\mathbb R}^{n+m}),$ then there exists a unique solution $u(x,t)$ of \eqref{role} which can be written as a finite sum of functions belonging to the space ${\mathcal RAP}_{c}({\mathbb R}^{n} \times \overline{{\mathbb R}^{m}_{T}}).$ 
\end{thm}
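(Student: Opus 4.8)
The plan is to follow the parametrix (Levi) construction of A. Friedman \cite{friedman} and to adapt, almost verbatim, the reasoning of Zhang and Jiang in the proof of \cite[Theorem 3.4]{c-zhang1}, replacing throughout the ordinary difference $F(\cdot+\omega)-F(\cdot)$ by the $c$-difference $F(\cdot+\omega)-cF(\cdot)$, in accordance with the definition of the spaces ${\mathcal RAP}_{c}({\mathbb R}^{n} \times \overline{{\mathbb R}^{m}_{T}})$ and ${\mathcal RAP}_{c}({\mathbb R}^{n+m})$. Throughout, remote $c$-almost periodicity is measured in the distinguished ${\mathbb R}^{n}$ directions, uniformly with respect to the remaining variables.

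First I would construct the fundamental solution $\Gamma(x,t;\xi,\tau)$ of the parabolic operator $L$ associated with \eqref{role}, $L:=\partial_{t}-\sum_{i=1}^{m+n}\bigl[\partial_{x_{i}}^{2}+b_{i}(x,t)\partial_{x_{i}}\bigr]+c(x,t)$, by the standard parametrix method. Under the stated regularity of $b_{i},\ \partial b_{i}/\partial x_{j}$ and $c$, this $\Gamma$ exists, is smooth off the diagonal, and satisfies Gaussian-type bounds of the form $|\partial_{x}^{\alpha}\Gamma(x,t;\xi,\tau)|\leq K(t-\tau)^{-(n+m+|\alpha|)/2}\exp(-\kappa|x-\xi|^{2}/(t-\tau))$ for $|\alpha|\leq 2$ and suitable constants $K,\kappa>0.$ The unique solution of \eqref{role} is then written, through the classical representation formula, as the sum of the initial-value potential $\int \Gamma(x,t;\xi,0)\varphi(\xi)\,d\xi$ and the volume potential $\int_{0}^{t}\!\!\int \Gamma(x,t;\xi,\tau)f(\xi,\tau)\,d\xi\,d\tau$; the announced decomposition of $u$ into a finite sum of elements of ${\mathcal RAP}_{c}$ is then read off by splitting these two potentials as in the cited references.

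The second step is to reformulate the three auxiliary results of \cite{c-zhang1} for multi-dimensional remotely $c$-almost periodic functions: the analogue of \cite[Lemma 3.1]{c-zhang1} (cf.\ the proof of \cite[Proposition 2.41]{c-zhang1}) supplies the pointwise $c$-difference estimates controlling $\Gamma(x+\omega,t;\cdot)$ against $c\,\Gamma(x,t;\cdot)$, while the analogues of \cite[Corollary 3.2, Lemma 3.3]{c-zhang1} supply the stability of the class ${\mathcal RAP}_{c}$ under pointwise multiplication by the bounded coefficients and under the action of the potential operators. Granting these, one applies them term by term to the representation formula and concludes that each summand lies in ${\mathcal RAP}_{c}({\mathbb R}^{n}\times\overline{{\mathbb R}^{m}_{T}}).$

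The \emph{main obstacle} will be the convolution invariance, namely that the volume potential of a remotely $c$-almost periodic $f$ is again remotely $c$-almost periodic. The difficulty is that $\Gamma$ is itself only Gaussian-decaying and not (almost) periodic, nor translation invariant (the coefficients being variable), so the shift $\omega$ cannot simply be transferred across the integral. The remedy, exactly as in \cite{c-zhang1}, is to split the spatial integration into a bounded ball $B(x,R)$ and its complement: on the ball one invokes the $c$-difference estimate for $f$ together with the $L^{1}$-bound for the Gaussian kernel, and on the complement one uses the decay of $\Gamma$ to make the remainder uniformly small, exploiting that remote $c$-almost periodicity concerns only the limit $|x|\to+\infty$ in the distinguished directions. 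The uniform-continuity requirement built into ${\mathcal RAP}_{c}$ is treated separately and is preserved under the potentials, precisely as uniform continuity is preserved in the convolution estimates \cite[(3.1); (3.2)]{BIMV}. Finally, uniqueness is the routine part: it follows from the maximum principle for $L$ (equivalently, from uniqueness of the Friedman representation), so that any two solutions in the stated class coincide.
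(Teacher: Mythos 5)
Your proposal follows essentially the same route as the paper: the authors likewise prove Theorem \ref{kopkop} by invoking Friedman's parametrix construction and transplanting \cite[Lemma 3.1, Corollary 3.2, Lemma 3.3]{c-zhang1} and the argument of \cite[Theorem 3.4]{c-zhang1} to the $c$-difference setting, with the representation by initial-value and volume potentials yielding the finite-sum decomposition. In fact the paper gives only this citation-level sketch, so your write-up supplies more detail (the Gaussian bounds, the ball/complement splitting, the preservation of uniform continuity) than the original while remaining faithful to its strategy.
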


Let us also point out that the statement of \cite[Proposition 2.2]{c-zhang1} does not
hold for remotely $c$-almost periodic functions unless $c=1.$ We will not analyze the inverse parabolic problems here.

2. The convolution invariance of multi-dimensional quasi-asymptotically $c$-almost periodic functions have recently been analyzed in \cite[Theorem 3.6]{multi-arx}. For our purposes, the following special case of this theorem will be sufficiently enough:

\begin{lem}\label{milenko1}
Suppose that ${\mathbb D}={\mathbb R}^{n}$, $h\in L^{1}({\mathbb R}^{n}),$ $\emptyset  \neq I'\subseteq {\mathbb R}^{n}$ is unbounded and $F : {\mathbb R}^{n} \rightarrow Y$ is a bounded continuous function.  
Then the function $(h\ast F)(\cdot),$ given by 
\begin{align*}
(h\ast F)({\bf t}):=\int_{{\mathbb R}^{n}}h(\sigma) F({\bf t}-\sigma)\, d\sigma,\quad {\bf t}\in {\mathbb R}^{n},
\end{align*}
is well defined and bounded; furthermore, if $F(\cdot;\cdot)$ is 
quasi-asymptotically $(I',c)$-almost periodic, resp. quasi-asymptotically $(I',c)$-uniformly recurrent,
then the function $(h\ast F)(\cdot)$ is likewise quasi-asymptotically $(I',c)$-almost periodic,
resp. quasi-asymptotically $(I',c)$-uniformly recurrent.
\end{lem}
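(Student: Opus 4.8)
The plan is to handle the two claims separately: well-definedness together with boundedness is routine, while the transfer of quasi-asymptotic $(I',c)$-almost periodicity (resp. uniform recurrence) rests on a standard convolution-splitting estimate in which the $L^{1}$-mass of $h$ is concentrated on a bounded set. Throughout write $\|F\|_{\infty}:=\sup_{{\bf s}\in{\mathbb R}^{n}}\|F({\bf s})\|_{Y}<\infty$, which is finite by hypothesis.

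First I would dispose of the well-definedness, boundedness, and continuity. For fixed ${\bf t}$ the integrand obeys $\|h(\sigma)F({\bf t}-\sigma)\|_{Y}\le |h(\sigma)|\,\|F\|_{\infty}$, and the majorant is integrable since $h\in L^{1}({\mathbb R}^{n})$; hence the Bochner integral defining $(h\ast F)({\bf t})$ exists and $\|(h\ast F)({\bf t})\|_{Y}\le \|h\|_{L^{1}}\|F\|_{\infty}$ for every ${\bf t}$, giving boundedness. Continuity of $h\ast F$ follows from the dominated convergence theorem along any sequence ${\bf t}_{m}\to{\bf t}$, using continuity of $F$ and the same integrable majorant.

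For the main assertion I would start from the identity
\[
(h\ast F)({\bf t}+{\bf \tau})-c\,(h\ast F)({\bf t})=\int_{{\mathbb R}^{n}}h(\sigma)\bigl[F({\bf t}+{\bf \tau}-\sigma)-c\,F({\bf t}-\sigma)\bigr]\,d\sigma
\]
and split the integral over $\{|\sigma|\le R\}$ and $\{|\sigma|>R\}$. Set $K:=\|h\|_{L^{1}}+(1+|c|)\|F\|_{\infty}$. Given $\epsilon>0$, choose $R=R(\epsilon)$ so large that $\int_{|\sigma|>R}|h(\sigma)|\,d\sigma<\epsilon$; on the tail the bracket is bounded crudely by $(1+|c|)\|F\|_{\infty}$, contributing at most $(1+|c|)\|F\|_{\infty}\,\epsilon$. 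On the core $\{|\sigma|\le R\}$ I invoke Definition \ref{nakaza}(i) with ${\mathbb D}={\mathbb R}^{n}$ and tolerance $\epsilon$: this yields $l>0$ such that for each ${\bf t}_{0}\in I'$ there are ${\bf \tau}\in B({\bf t}_{0},l)\cap I'$ and $M=M(\epsilon,{\bf \tau})$ with $\|F({\bf s}+{\bf \tau})-c\,F({\bf s})\|_{Y}\le\epsilon$ whenever $|{\bf s}|\ge M$ and $|{\bf s}+{\bf \tau}|\ge M$. Taking ${\bf s}={\bf t}-\sigma$, for $|{\bf t}|\ge M+R+|{\bf \tau}|$ and $|\sigma|\le R$ one has both $|{\bf t}-\sigma|\ge M$ and $|{\bf t}-\sigma+{\bf \tau}|\ge M$, so the bracket is $\le\epsilon$ there and the core contributes at most $\|h\|_{L^{1}}\epsilon$. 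Combining the two pieces gives $\|(h\ast F)({\bf t}+{\bf \tau})-c\,(h\ast F)({\bf t})\|_{Y}\le K\epsilon$ for all $|{\bf t}|\ge M+R+|{\bf \tau}|$; running the argument with tolerance $\epsilon/K$ in place of $\epsilon$ then verifies Definition \ref{nakaza}(i) for $h\ast F$, with the same translation set $I'$ and the same selected ${\bf \tau}$'s.

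The uniformly recurrent case proceeds identically, reusing the very sequence $({\bf \tau}_{k})$ supplied by Definition \ref{nakaza}(ii). Putting $\epsilon_{k}:=\sup_{|{\bf s}|\ge M_{k}}\|F({\bf s}+{\bf \tau}_{k})-c\,F({\bf s})\|_{Y}\to 0$ and $g(R):=\int_{|\sigma|>R}|h(\sigma)|\,d\sigma\to 0$, I would pick $R_{k}\to\infty$ so that $g(R_{k})\to 0$ and set $M_{k}':=M_{k}+|{\bf \tau}_{k}|+R_{k}\to\infty$; the same split then yields $\sup_{|{\bf t}|\ge M_{k}'}\|(h\ast F)({\bf t}+{\bf \tau}_{k})-c\,(h\ast F)({\bf t})\|_{Y}\le\|h\|_{L^{1}}\epsilon_{k}+(1+|c|)\|F\|_{\infty}\,g(R_{k})\to 0$. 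I expect the only delicate point to be the shift bookkeeping: the requirement that both ${\bf t}$ and ${\bf t}+{\bf \tau}$ lie in the truncation ${\mathbb D}_{M}$ forces the two lower bounds $|{\bf t}-\sigma|\ge M$ and $|{\bf t}-\sigma+{\bf \tau}|\ge M$ simultaneously, which is exactly what dictates enlarging the threshold by $R+|{\bf \tau}|$; the remainder is the routine $L^{1}$-tail estimate and requires no boundedness or smallness assumption on $c$.
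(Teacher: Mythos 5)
Your argument is correct, but note that the paper itself offers no proof of Lemma \ref{milenko1}: the statement is imported verbatim as a special case of \cite[Theorem 3.6]{multi-arx}, so there is no in-paper argument to compare against. Your route — Bochner integrability against the majorant $|h(\sigma)|\,\|F\|_{\infty}$ for well-definedness and boundedness, then splitting the convolution into the core $\{|\sigma|\le R\}$, where the quasi-asymptotic estimate for $F$ applies after enlarging the threshold to $M+R+|\tau|$, and the $L^{1}$-small tail $\{|\sigma|>R\}$, where the crude bound $(1+|c|)\|F\|_{\infty}$ suffices — is the standard one for such convolution-invariance statements, and the essential bookkeeping is all present: $R$ depends on $\epsilon$ alone, so the same $l$ and the same selected $\tau$'s serve for $h\ast F$, and only the threshold $M(\epsilon,\tau)$ needs to grow. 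One small slip in the recurrent case: as written, $\epsilon_{k}:=\sup_{|{\bf s}|\ge M_{k}}\|F({\bf s}+\tau_{k})-cF({\bf s})\|_{Y}$ is a supremum over a strictly larger set than Definition \ref{nakaza}(ii) controls (that definition only constrains ${\bf s}$ with both $|{\bf s}|\ge M_{k}$ and $|{\bf s}+\tau_{k}|\ge M_{k}$), so $\epsilon_{k}\to 0$ does not follow as stated; replacing it by the supremum over $\{{\bf s}:|{\bf s}|\ge M_{k},\ |{\bf s}+\tau_{k}|\ge M_{k}\}$ — which is all your core estimate actually uses, since $|{\bf t}|\ge M_{k}+|\tau_{k}|+R_{k}$ and $|\sigma|\le R_{k}$ force both lower bounds simultaneously — repairs this with no other change.
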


We want also to note that the uniform continuity of $F(\cdot)$ implies the uniform continuity of $(h\ast F)(\cdot),$ as can be simply shown; Lemma \ref{milenko1} and this observation will be crucial for the following application, which has recently been considered in \cite{marko-manuel-aa} for the multi-dimensional almost automorphic type functions. We will consider only integrated semigroups here (\cite{a43, knjiga}).

Suppose that
$k\in\mathbb{N}$, $a_{\alpha}\in\mathbb{C}$, $0\!\leq\! |\alpha|\!\leq\! k$, $a_{\alpha}\neq 0$
for some $\alpha$ with $|\alpha|=k$, $P(x)=\sum_{|\alpha|\leq k}a_{\alpha}i^{|\alpha|}x^{\alpha}$,
$x\in\mathbb{R}^n$, $P(\cdot)$ is an elliptic polynomial, i.e., there exist $C>0$ and $L>0$
such that $|P(x)|\geq C|x|^k$, $|x|\geq L$, $\omega:=\sup_{x\in\mathbb{R}^n}\Re(P(x))<\infty$, and
$X$ is $C_b(\mathbb{R}^n)$ or
$BUC(\mathbb{R}^n)$ [the space of bounded uniformly continuous functions $f : {\mathbb R}^{n} \rightarrow {\mathbb C}$ equipped with the sup-norm]. Define
$$
P(D):=\sum_{|\alpha|\leq k}a_{\alpha}f^{(\alpha)}
\text{ and } Dom(P(D)):=\bigl\{f\in E:P(D)f\in E\text{ distributionally}\bigr\}
$$
and assume that $n_X>n/2$.
Then we know that the operator $P(D)$ generates an exponentially bounded $r$-times integrated semigroup $(S_{r}(t))_{t\geq 0}$
in $X$ for any $r>n_X.$ Furthermore, we know that for each $t\geq 0$ there exists a function $f_{t} \in L^{1}({\mathbb R}^{n})$ such that
$$
\bigl[S_{r}(t)f\bigr](x):=\bigl(f_{t}\ast f\bigr)(x),\quad x\in {\mathbb R}^{n},\ f\in X.
$$
Fix a number $t_{0} \geq 0$ and assume that
the function $X\ni f$ is 
remotely $c$-almost periodic, for example. By the foregoing, we get that the function $x\mapsto [S_{r}(t_{0})f](x),$ $x\in {\mathbb R}^{n}$ is likewise remotely $c$-almost periodic, which means that there exists a unique $X$-valued continuous function $t\mapsto u(t),$ $t\geq 0$ such that
$\int^{t}_{0}u(s)\, ds \in Dom(P(D))$ for every $t\geq 0$ and
$$
u(t)=P(D)\int^{t}_{0}u(s)\, ds -\frac{t^{r}}{\Gamma (r+1)}f,\quad t\geq 0;
$$ 
furthermore, the solution $t\mapsto u(t),$ $t\geq 0$ has the property that its trajectory consists solely of remotely $c$-almost periodic functions. Suppose now that
$r\in {\mathbb N},$ $f\in Dom(P(D)^{r})$ and the function
$$
\bigl(f,\ P(D)f,\cdot \cdot \cdot, P(D)^{r}f\bigr)
$$
is remotely $c$-almost periodic. Then the function
\begin{align}\label{kojo-kojo}
u(t):=S_{r}(t)P(D)^{r}f+\frac{t^{r-1}}{(r-1)!}P(D)^{r-1}f+\cdot \cdot \cdot +t P(D)f+f,\quad t\geq 0
\end{align}
is a unique continuous $X$-valued function which satisfies that 
$\int^{t}_{0}u(s)\, ds \in Dom(P(D))$ for every $t\geq 0$ and
$$
u(t)=P(D)\int^{t}_{0}u(s)\, ds -f,\quad t\geq 0;
$$ 
due to \eqref{kojo-kojo} and our assumptions, for every fixed number $t\geq 0$ we have that the function $ u(t)\in X$ is remotely $c$-almost periodic.

\subsection{An application in mathematical biology}\label{biology}

The application is closely related with our recent investigation of remotely almost periodic solutions of ordinary differential equations \cite{maulen} (some results established in this research article, like \cite[Theorem 3]{maulen}, can be reformulated for remotely $c$-almost periodic functions but we will skip all related details for simplicity). 

Let us recall that the Chapman-Richards functions and the Chapman-Richards
equations are incredibly important in the mathematical
biology. The Chapman-Richards functions generalize
monomolecular
functions and Gompertz functions, while the Chapman-Richards equations generalize the logistic equations. It is well known that the
Chapman-Richards equations have many applications in the
forestry, thanks to their flexibility and important analytical
features.

Of concern is the following Richard-Chapman equation with an external perturbation $f(\cdot):$
\begin{align}\label{(3.12)}
x^{\prime}(t)= x (t)\Bigl[ a (t) - b (t) x^{\theta} (t)\Bigr] + f (t),
\end{align}
where $\theta \geq 0.$ We will analyze slowly oscillating functions in the classical sense here, so that any such a function will be bounded and uniformly continuous in this part. 
Consider the following hypotheses:
\begin{itemize}
\item[(H1)] $a (t),$  $b (t) $ and $f (t)$ are slowly oscillating functions;
\item[(H2)]$ 0 <\alpha\leq 	a(t)	\leq A,$ $0 <\beta\leq	b(t)	\leq B,$  $0 < f(t) < F ;$
\item[(H3)] With $\omega=A^{-1}[\beta-\gamma^{(1+\theta)/\theta}F]$ and $\gamma=B/\alpha,$ we have $(1+\theta)F\gamma^{1/\theta}\theta^{-1}\alpha^{-1}<1$ and $\beta (1+\theta)B\theta^{-1}<1$.
\end{itemize}			

We need the following auxiliary lemma, which generalizes \cite[Lemma 3.5]{f-yang} (see also \cite[Lemma 4]{maulen}):

\begin{lem}\label{tehno}
Suppose that $\alpha>0,$ the functions $a : {\mathbb R} \rightarrow [\alpha,\infty)$ and $f : {\mathbb R} \rightarrow {\mathbb R}$ are slowly oscillating. Then the function
$$
t\mapsto F(t)\equiv \int_{-\infty}^{t}e^{-\int^{t}_{s}a(r)\, dr}f(s)\, ds,\quad t\in {\mathbb R}
$$
is slowly oscillating, as well.
\end{lem}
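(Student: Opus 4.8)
The plan is to verify the three defining features of a classical slowly oscillating function for $F$: boundedness, continuity, and the oscillation condition $\lim_{|t|\to+\infty}|F(t+\omega)-F(t)|=0$ for every fixed $\omega\in\mathbb{R}$. Boundedness is immediate: since $a(r)\geq\alpha>0$ we have $\int_{s}^{t}a(r)\,dr\geq\alpha(t-s)$ for $s\leq t$, so the kernel is dominated by $e^{-\alpha(t-s)}$, and with $M_{f}:=\sup_{s\in\mathbb{R}}|f(s)|<\infty$ (slow oscillation forces boundedness here) one gets $|F(t)|\leq M_{f}/\alpha$. Continuity (in fact $C^{1}$-smoothness) follows by differentiating under the integral sign, which yields $F'(t)=f(t)-a(t)F(t)$; thus $F$ solves a linear first-order ODE and is in particular continuous.

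For the oscillation condition, fix $\omega\in\mathbb{R}$. First I would align the two integrals by the substitutions $s\mapsto s+\omega$ in $F(t+\omega)$ and $r\mapsto r+\omega$ in the inner exponent, which rewrites
\begin{align*}
F(t+\omega)=\int_{-\infty}^{t}e^{-\int_{u}^{t}a(v+\omega)\,dv}\,f(u+\omega)\,du,
\end{align*}
so that $F(t+\omega)$ and $F(t)$ are now integrals over the same range $(-\infty,t]$. Subtracting and inserting an intermediate term decomposes the difference as $F(t+\omega)-F(t)=I_{1}(t)+I_{2}(t)$, where
\begin{align*}
I_{1}(t):=\int_{-\infty}^{t}e^{-\int_{u}^{t}a(v+\omega)\,dv}\bigl[f(u+\omega)-f(u)\bigr]\,du
\end{align*}
isolates the oscillation of $f$, while
\begin{align*}
I_{2}(t):=\int_{-\infty}^{t}\Bigl[e^{-\int_{u}^{t}a(v+\omega)\,dv}-e^{-\int_{u}^{t}a(v)\,dv}\Bigr]f(u)\,du
\end{align*}
isolates the oscillation of $a$.

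The term $I_{1}$ is the easier one. Writing $w=t-u$ and using the domination $e^{-\int_{u}^{t}a(v+\omega)\,dv}\leq e^{-\alpha w}$, I would bound
\begin{align*}
|I_{1}(t)|\leq\int_{0}^{\infty}e^{-\alpha w}\bigl|f(t-w+\omega)-f(t-w)\bigr|\,dw.
\end{align*}
Since $f$ is slowly oscillating, $g(u):=f(u+\omega)-f(u)$ tends to $0$ as $|u|\to+\infty$ and satisfies $|g|\leq 2M_{f}$. For $t\to-\infty$ every point $t-w$ lies far to the left, so $|g(t-w)|$ is uniformly small and the bound is at most $(\sup_{v\leq t}|g(v)|)/\alpha\to0$. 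For $t\to+\infty$ I would cut the $w$-integral at a large $W$ (the tail being controlled by $2M_{f}\int_{W}^{\infty}e^{-\alpha w}\,dw$) and observe that on $[0,W]$ the arguments $t-w$ become large positive, so $|g(t-w)|$ is small. Hence $I_{1}(t)\to0$ as $|t|\to+\infty$.

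The main obstacle is $I_{2}$. I would use the elementary estimate $|e^{-x}-e^{-y}|\leq e^{-\min(x,y)}|x-y|$ together with $\int_{u}^{t}a(v+\omega)\,dv\geq\alpha(t-u)$ and the same lower bound for $a(v)$, giving
\begin{align*}
|I_{2}(t)|\leq M_{f}\int_{0}^{\infty}e^{-\alpha w}\Bigl(\int_{t-w}^{t}|a(v+\omega)-a(v)|\,dv\Bigr)\,dw.
\end{align*}
Set $h(v):=a(v+\omega)-a(v)$, which tends to $0$ as $|v|\to+\infty$ by slow oscillation of $a$ and satisfies $|h|\leq 2M_{a}$ with $M_{a}:=\sup_{v}|a(v)|$. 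The delicate direction is $t\to+\infty$: for large $w$ the inner interval $[t-w,t]$ reaches back into regions where $h$ need not be small, so one cannot simply bound $h$ by $\epsilon$ everywhere. The resolution is to exploit the weight. Given $\epsilon>0$, choose $R$ so that $|h(v)|<\epsilon$ for $|v|>R$, and choose $W$ so that the tail $M_{f}\int_{W}^{\infty}e^{-\alpha w}(2M_{a}w)\,dw<\epsilon$ (this uses $\int_{0}^{\infty}e^{-\alpha w}w\,dw=\alpha^{-2}<\infty$). On $[0,W]$ we have $[t-w,t]\subseteq[t-W,t]$, so for $t>R+W$ the whole interval lies in $\{v>R\}$ and the inner integral is at most $\epsilon w$, contributing at most $M_{f}\epsilon/\alpha^{2}$. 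This bounds $|I_{2}(t)|$ by a fixed constant multiple of $\epsilon$ for all large $t$. The case $t\to-\infty$ is immediate, since then $[t-w,t]$ lies entirely in $\{v<-R\}$ once $t<-R$. Combining the two cases gives $I_{2}(t)\to0$, and therefore $F(t+\omega)-F(t)\to0$ as $|t|\to+\infty$, completing the verification that $F$ is slowly oscillating.
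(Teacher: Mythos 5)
Your proof is correct and takes essentially the same route as the paper's: the same split of $F(t+\omega)-F(t)$ into a term carrying the oscillation of $f$ and one carrying the oscillation of $a$, and the same truncation of the integration range to handle $t\to+\infty$ (your mean-value bound $|e^{-x}-e^{-y}|\le e^{-\min(x,y)}|x-y|$ is in fact slightly cleaner than the paper's $|1-e^{x}|\le |x|e^{|x|}$, which drags along an extra factor $e^{\epsilon|s|}$ in the weight). One aside: the oscillation condition by itself does not force boundedness of $f$ --- the paper's own example $t\mapsto t^{\sigma}$ shows this --- but $M_{f}<\infty$ still holds because boundedness is built into the classical definition of slowly oscillating used in this section.
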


\begin{proof}
Let $\omega \in {\mathbb R}\setminus \{0\}.$
The proof that the function $F(t)$ is slowly oscillating as $ t\rightarrow -\infty$ follows from the existence of  a sufficiently large number $t_{0}>0$ such that $|a(t+\omega)-a(t)|+|f(t+\omega)-f(t)|<\epsilon$ provided that $|t|>t_{0},$ as well as the following calculation:
\begin{align*}
&|F(t+\omega)-F(t)|
\\&=\Biggl| \int^{0}_{-\infty}e^{-\int^{t}_{s+t}a(r+\omega)\, dr}f(t+s+\omega)\, ds- \int^{0}_{-\infty}e^{-\int^{t}_{s+t}a(r)\, dr}f(t+s)\, ds\Biggr|
\\& \leq    \int^{0}_{-\infty}e^{-\int^{t}_{s+t}a(r+\omega)\, dr}|f(t+s+\omega)-f(t+s)|\, ds
\\& +\|f \|_{\infty} \int^{0}_{-\infty}\Biggl|  e^{-\int^{t}_{s+t}a(r+\omega)\, dr}-e^{-\int^{t}_{s+t}a(r)\, dr} \Biggr|\, ds
\\& \leq (\epsilon /\alpha)+ \|f \|_{\infty} \int^{0}_{-\infty}e^{\alpha s}\Biggl|  1-e^{\int^{t}_{s+t}[a(r+\omega)-a(r)]\, dr} \Biggr|\, ds
\\& \leq (\epsilon /\alpha)+ \|f \|_{\infty}   \int^{0}_{-\infty}e^{\alpha s}\Biggl| \int^{t}_{s+t}[a(r+\omega)-a(r)]\, dr \Biggr| e^{\Bigl| \int^{t}_{s+t}[a(r+\omega)-a(r)]\, dr \Bigr| }\, ds
\\& \leq (\epsilon /\alpha)+ \|f \|_{\infty}\epsilon \int^{0}_{-\infty}|s|e^{(\alpha +\epsilon)s}\, ds= (\epsilon /\alpha)+ \|f \|_{\infty}\epsilon  (\alpha +\epsilon)^{2},\quad t<-t_{0}.
\end{align*}
The proof that the function $F(t)$ is slowly oscillating as $ t\rightarrow +\infty$ is a little incorrect in \cite[Lemma 3.5]{f-yang} but we can apply a trick from \cite[Remark 2.2]{c-zhang-guo} here. Strictly speaking, we can use the same decomposition and calculation as above  but we need to divide first the interval of integration $(-\infty,0]$ into two subintervals $(-\infty,-M]$ and $[-M,0],$ where $M>0$ is a sufficiently large real number such that $\int^{0}_{-\infty}e^{\alpha s}\, ds <\epsilon/2.$ 
\end{proof}

Keeping in mind Lemma \ref{tehno}, the fact that the space of real-valued slowly oscillating functions is closed under pointwise products and sums, as well as the fact that for each positive slowly oscillating function $f : {\mathbb R} \rightarrow (0,\infty)$ and for each real number $r>0$ the function $f^{r}:  {\mathbb R} \rightarrow (0,\infty)$ is also slowly oscillating,
we can repeat verbatim the
argumentation contained in the proof of \cite[Theorem 6]{maulen} in order to see that the following result holds true:

\begin{thm}\label{bukva}
Suppose that the hypotheses \emph{(H1)}-\emph{(H3)} hold. Then the equation \eqref{(3.12)} has a unique slowly oscillating	 solution $\phi^{\ast}(t)$ satisfying $\gamma^{-1/\theta}\leq \phi^{\ast}(t) \leq \omega^{-1/\theta}$ for all $t\in {\mathbb R}.$
\end{thm}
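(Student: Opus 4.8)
The plan is to remove the destabilising effect of the nonlinearity by the classical Bernoulli substitution and then solve the resulting equation by a contraction argument inside the space of slowly oscillating functions. Since the quantities in (H3) carry a factor $\theta^{-1}$ and the bounds are stated through $\gamma^{-1/\theta},\ \omega^{-1/\theta}$, we work under the implicit assumption $\theta>0$. Setting $y:=x^{-\theta}$, a direct computation transforms \eqref{(3.12)} into the semilinear equation
\[
y'(t)=-\theta a(t)y(t)+\theta b(t)-\theta f(t)\,y(t)^{(1+\theta)/\theta},
\]
whose linear part $-\theta a(t)y$ is now exponentially stable because $a(t)\geq\alpha>0$. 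Consequently, a bounded solution of this equation on the whole line is precisely a fixed point of the operator
\[
(\mathcal{T}y)(t):=\int_{-\infty}^{t}e^{-\theta\int_{s}^{t}a(r)\,dr}\Bigl[\theta b(s)-\theta f(s)\,y(s)^{(1+\theta)/\theta}\Bigr]\,ds,\quad t\in{\mathbb R},
\]
and conversely any such fixed point $y$ yields, via $\phi^{\ast}:=y^{-1/\theta}$, a solution of \eqref{(3.12)} with the asserted two-sided bounds once $\omega\leq y\leq\gamma$ has been established.

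First I would fix the complete metric space on which $\mathcal{T}$ acts: the set $\mathcal{S}$ of all slowly oscillating functions $y:{\mathbb R}\rightarrow{\mathbb R}$ with $\omega\leq y(t)\leq\gamma$ for all $t$, equipped with the sup-metric. This set is nonempty (it contains the constants in $[\omega,\gamma]$, which is a genuine interval since (H2) forces $\omega\leq\gamma$) and closed under uniform limits, hence complete, because the class of slowly oscillating functions is invariant under uniform convergence. The self-mapping property has two parts. For the bounds I would use (H2) together with the identities $\int_{-\infty}^{t}e^{-\theta\alpha(t-s)}\,ds=1/(\theta\alpha)$ and $\int_{-\infty}^{t}e^{-\theta A(t-s)}\,ds=1/(\theta A)$: estimating the bracket from above by $\theta b(s)\leq\theta B$ gives $\mathcal{T}y\leq B/\alpha=\gamma$, while estimating it from below by $\theta\beta-\theta F\gamma^{(1+\theta)/\theta}$ (using $b\geq\beta$, $f<F$, $y\leq\gamma$) gives $\mathcal{T}y\geq A^{-1}[\beta-\gamma^{(1+\theta)/\theta}F]=\omega$; here the definitions of $\gamma$ and $\omega$ are calibrated exactly so that the bounds reproduce themselves. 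For the slowly oscillating property I would observe that $\mathcal{T}y$ has precisely the form treated in Lemma \ref{tehno} with $\theta a$ in place of $a$ and with forcing term $g(s):=\theta b(s)-\theta f(s)y(s)^{(1+\theta)/\theta}$; since $y$ is slowly oscillating and bounded away from $0$ and $\infty$, the power $y^{(1+\theta)/\theta}$ is slowly oscillating (closure under positive powers of positive slowly oscillating functions), whence $g$ is slowly oscillating (closure under products and sums), and Lemma \ref{tehno} delivers $\mathcal{T}y\in\mathcal{S}$.

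Next I would show that $\mathcal{T}$ is a contraction. For $y_{1},y_{2}\in\mathcal{S}$ the difference $\mathcal{T}y_{1}-\mathcal{T}y_{2}$ involves only the term $-\theta f(s)[y_{1}(s)^{(1+\theta)/\theta}-y_{2}(s)^{(1+\theta)/\theta}]$; applying the mean value theorem to $u\mapsto u^{(1+\theta)/\theta}$ on $[\omega,\gamma]$ bounds this by $\tfrac{1+\theta}{\theta}\gamma^{1/\theta}|y_{1}(s)-y_{2}(s)|$, and combining with $f<F$ and $\int_{-\infty}^{t}e^{-\theta\alpha(t-s)}\,ds=1/(\theta\alpha)$ yields
\[
\bigl\|\mathcal{T}y_{1}-\mathcal{T}y_{2}\bigr\|_{\infty}\leq(1+\theta)F\gamma^{1/\theta}\theta^{-1}\alpha^{-1}\,\bigl\|y_{1}-y_{2}\bigr\|_{\infty}.
\]
The contraction constant here is exactly the quantity assumed to be strictly less than $1$ in (H3), so the Banach fixed point theorem provides a unique $y^{\ast}\in\mathcal{S}$ with $\mathcal{T}y^{\ast}=y^{\ast}$; differentiating the integral equation shows that $y^{\ast}$ solves the transformed ODE, and $\phi^{\ast}:=(y^{\ast})^{-1/\theta}$ is then the required solution of \eqref{(3.12)}, slowly oscillating by closure under positive powers, with $\gamma^{-1/\theta}\leq\phi^{\ast}\leq\omega^{-1/\theta}$ inherited from $\omega\leq y^{\ast}\leq\gamma$. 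Uniqueness among such solutions follows from the uniqueness of the fixed point.

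I expect the main obstacle to be the bookkeeping in the self-mapping step, namely verifying that the engineered constants $\gamma=B/\alpha$ and $\omega=A^{-1}[\beta-\gamma^{(1+\theta)/\theta}F]$ reproduce the bounds and that $\omega>0$ (so that $\omega^{-1/\theta}$ is meaningful and the lower estimate does not reverse sign against the larger kernel $e^{-\theta A(t-s)}$), which is where the remaining conditions of (H3) must enter, together with the careful invocation of the closure properties of slowly oscillating functions under positive powers and products that make Lemma \ref{tehno} applicable to the genuinely nonlinear forcing term $g$.
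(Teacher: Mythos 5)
Your proposal is correct and follows essentially the same route as the paper: the paper's proof of Theorem \ref{bukva} simply invokes Lemma \ref{tehno}, closure of slowly oscillating functions under sums, products and positive powers, and then refers to the argument of \cite[Theorem 6]{maulen}, which is precisely the Bernoulli substitution $y=x^{-\theta}$ followed by a contraction on the set of slowly oscillating functions with $\omega\leq y\leq\gamma$ that you carry out explicitly (the form of the constants in (H3) confirms this). Your writeup in fact supplies the details the paper omits, and your remark that one must separately ensure $\omega>0$ is a legitimate point about the hypotheses as stated.
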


\end{document}